\newtheorem{definition}{Definition}
\newtheorem{myth}{Theorem}
\newtheorem{myco}{Corollary}
\newcommand{\Probab}[1]{\mathcal{P}({#1})}
\newcommand{\Pcond}[2]{\Probab{{#1}\mid{#2}}}
\newcommand{\Pconj}[2]{\Probab{{#1} \wedge {#2}}}
\renewcommand{\~}[1]{\overline{#1}}
\newcommand{\ALGNAME}{CAPRI}
\newcommand{\ALGNAMEPLOS}{CAPRESE}
\newcommand{\causes}{\rhd}
\renewcommand{\models}{\Vvdash}
\newcommand{\PR}{\textsc{pr}}
\newcommand{\TP}{\textsc{tp}}
\newcommand{\at}{atomic}
\newcommand{\cc}{\boldsymbol{c}}
\newcommand{\conj}[1]{\text{clauses}\left({#1}\right)}
\newcommand{\setbp}{\textsc{setbp1}}
\newcommand{\asxl}{\textsc{asxl1}}
\newcommand{\tet}{\textsc{tet2}}
\newcommand{\aCML}{\textsc{aCML}}
\newcommand{\cbl}{\textsc{cbl}}
\newcommand{\sfb}{\textsc{sf3b1}}
\newcommand{\ezh}{\textsc{ezh2}}
\newcommand{\idh}{\textsc{idh2}}
\newcommand{\nomark}{\color{red}{\ding{55}}}
\newcommand{\okmark}{\color{green}{\ding{51}}}
\newcommand{\Newrev}[1]{{\color{black}{#1}}}
\title{CAPRI: Efficient Inference of Cancer Progression
 Models from Cross-sectional Data}
\author[1]{Daniele Ramazzotti\thanks{To whom correspondence should be addressed.}}
\author[1]{Giulio Caravagna}
\author[2]{Loes Olde Loohuis}
\author[1]{Alex Graudenzi}
\author[3]{Ilya Korsunsky}
\author[1]{Giancarlo Mauri}
\author[1]{Marco Antoniotti}
\author[3]{Bud Mishra}
\affil[1]{Dept. of Informatics, Systems and Communication, University of Milan Bicocca, Milan, Italy}
\affil[2]{Center for Neurobehavioral Genetics, University of California Los Angeles, Los Angeles, USA}
\affil[3]{Courant Institute of Mathematical Sciences, New York University, New York, USA}
\date{}
\begin{document}



%

\maketitle

\vspace{-0.5mm}


\begin{abstract}

\noindent
We devise a novel inference 
 algorithm to effectively solve the \emph{cancer
  progression model reconstruction} problem.  Our empirical analysis
of the accuracy and convergence rate of our algorithm,
\emph{CAncer PRogression Inference} (CAPRI), 
shows that it outperforms the state-of-the-art algorithms 
addressing similar problems.

\paragraph{\textbf{Motivation:}}
 Several cancer-related genomic data have become available (e.g., \emph{The Cancer
  Genome Atlas}, TCGA) typically involving hundreds of patients.  
 At present, most of these data are aggregated in a
\emph{cross-sectional} fashion providing all measurements at the time of diagnosis.

Our goal is to infer cancer ``progression'' models from such 
data. These models are represented as \Newrev{directed acyclic graphs (DAGs)} of 
collections of ``\emph{selectivity}'' relations,
where a mutation in a gene
\textit{A} ``selects'' for a later mutation in a gene \textit{B}.
Gaining insight into the structure of such
progressions has the potential to improve both the stratification of patients
and personalized therapy choices.

\paragraph{\textbf{Results:}}
The CAPRI algorithm relies on a scoring method based on a {\em probabilistic theory\/} developed by Suppes,
coupled with \emph{bootstrap}
and \emph{maximum likelihood} inference. The resulting algorithm
is efficient, achieves high accuracy, 
and has 
good complexity, also, in terms of convergence properties.  
CAPRI performs especially well
 in the presence of noise in the data, and with limited
sample sizes.  Moreover CAPRI, in contrast to other approaches, 
robustly reconstructs different types of confluent trajectories despite
irregularities in the data.

We also report on an ongoing investigation using
CAPRI to study  \emph{atypical Chronic Myeloid Leukemia},  \Newrev{in which we uncovered non trivial selectivity relations and exclusivity patterns among key genomic events. }


\paragraph{\textbf{Availability:}}
CAPRI is part of the \emph{TRanslational ONCOlogy} R package and is
freely available on the web at:\\
\texttt{http://bimib.disco.unimib.it/index.php/Tronco}

\paragraph{\textbf{Contact:}}
\href{daniele.ramazzotti@disco.unimib.it}{daniele.ramazzotti@disco.unimib.it}
\end{abstract}




\section{Introduction}
\label{sect:introduction}
Analysis and interpretation of the fast-growing biological data sets
that are currently being curated from laboratories all over the world
require sophisticated computational and statistical methods. 

Motivated by the availability of genetic patient data,
we focus on the problem of
\emph{reconstructing progression models} of cancer. In particular, we
aim to infer
the plausible sequences of \emph{genomic alterations} that, by a
process of \emph{accumulation}, selectively make a tumor fitter to
survive, expand and diffuse (i.e., metastasize). 
Along the trajectories of progression, a tumor (monotonically) acquires or
``activates'' mutations in the genome, which, in turn, produce progressively more
``viable'' clonal subpopulations over the so-called 
\emph{cancer evolutionary landscape}
(cfr., \cite{Merlo:2006em,huang_2009,Vogelstein29032013}). 

 Knowledge of such
progression models is very important for drug development and in
therapeutic decisions. For example, it has been known that for the same cancer type, patients
in different stages of different progressions  respond differently to different 
treatments.


Several datasets are currently available that aggregate 
diverse cancer-patient data and report in-depth mutational profiles, 
including e.g., structural changes (e.g., inversions, translocations, copy-number variations) or somatic mutations
(e.g., point mutations, insertions, deletions, etc.). An example of
such a dataset is \emph{The Cancer Genome Atlas} (TCGA)
(cfr., \cite{tcga})).  These data, by their 
very nature,
only give a snapshot of a given tumor sample,
mostly from biopsies of untreated tumor samples at the time of diagnoses. It still remains
impractical to track the tumor progression in any single patient over
time, thus limiting most analysis methods to work with
\emph{cross-sectional} data\footnote{Unlike longitudinal
  studies, these cross-sectional data are derived from samples that are collected at
  unknown time points, and can be considered as ``static''.}.


To rephrase, we focus on the problem of \emph{cancer
  progression models reconstruction from cross-sectional
  data}.  The problem is not new and, to the best of our knowledge,
two threads of research starting in the
late 90's have addressed it.
The first category of works examined mostly gene-expression
data to reconstruct the temporal ordering of samples
(cfr., \cite{magwene03:_reconstructing,gupta_bar_joseph08:_extracting}).
The second category of works 
looked at inferring cancer
progression models of increasing model-complexity, starting from the simplest tree models
(cfr. \cite{desper_1999}) to more complex graph models (cfr.,
\cite{gerstung2009quantifying}); see the next subsection for an overview of the state of
the art.  
%
%
Building on our previous work described in \cite{Loohuis:2014im} we present a novel
and comprehensive algorithm of the second category that addresses this
problem.

\begin{figure*}[t]
  \begin{center}
    \includegraphics[width=0.80\textwidth]{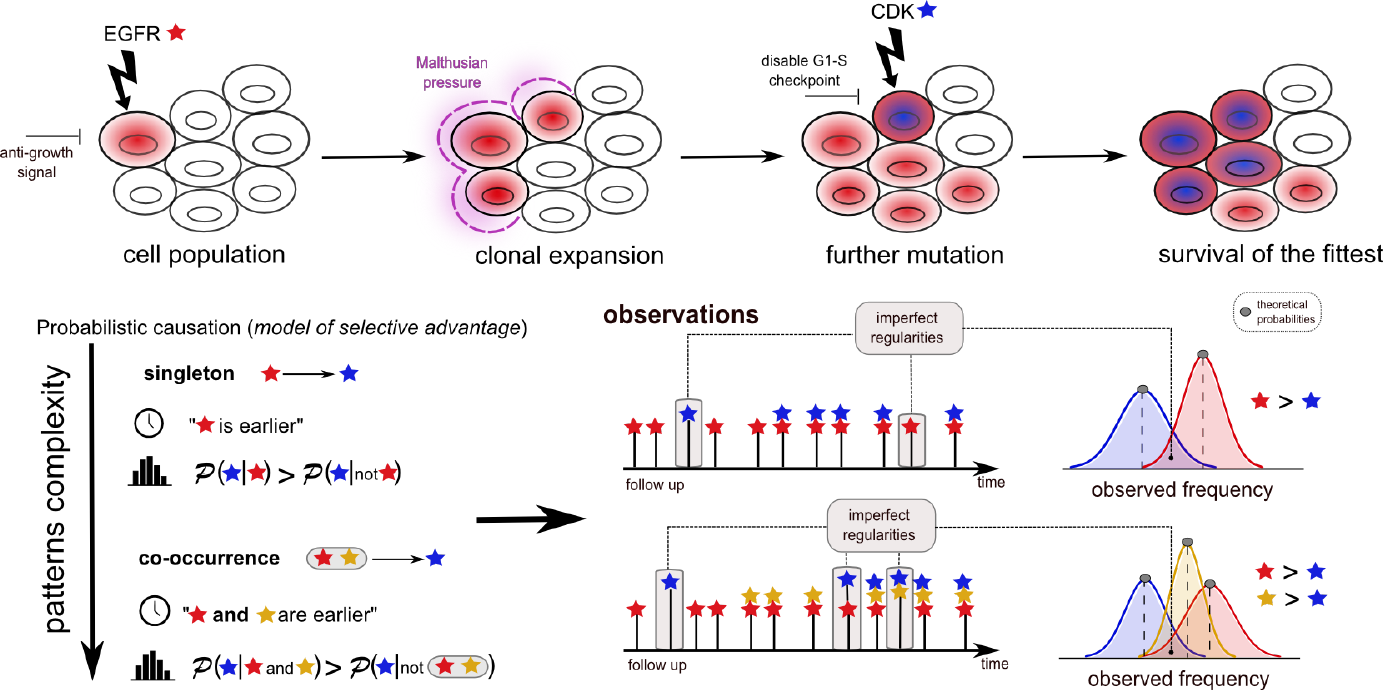}
  \end{center}
  \vspace*{.05in}
  \caption[Selectivity Relation in Tumor Evolution]{\textbf{Selectivity Relation in Tumor Evolution.}
  The \emph{CAncer PRogression Inference} (CAPRI) algorithm
    examines cancer patients' genomic cross-sectional data to determine 
    relationships among genomic alterations (e.g., somatic mutations,
    copy-number variations, etc.) that modulate the somatic evolution
    of a tumor. When CAPRI concludes that aberration $a$ (say,  an
    \textsc{egfr} mutation) ``selects for'' aberration $b$ (say, a
    \textsc{cdk} mutation),
    such relations can be rigorously expressed using Suppes'
    conditions, which postulates that if $a$ selects $b$,
    then $a$ occurs before $b$ (\emph{temporal priority}) and occurrences of
    $a$ raises the probability of emergence of $b$ (\emph{probability
    raising}).  Moreover, CAPRI is capable of reconstructing relations
    among more complex boolean combination of events, as shown in the
    bottom panel and discussed in the Approach section.
  }
\label{fig:overview}
\end{figure*}

The new algorithm proposed here is called \emph{CAncer PRogression
  Inference} (CAPRI) and is part of the \emph{TRanslational ONCOlogy}
(TRONCO) package (cfr., \cite{tronco}). Starting from cross-sectional
genomic data, CAPRI reconstructs a probabilistic progression model 
by inferring ``selectivity relations'', where a mutation in a gene
\textit{A} ``selects'' for a later mutation in a gene \textit{B}. 
These relations are depicted in a combinatorial graph and resemble the way a mutation exploits its
 ``\emph{selective advantage}''
to allow its host cells to expand
 clonally.  Among other things, a selectivity relation implies a putatively invariant temporal structure among the
genomic alterations (i.e., \emph{events}) in a specific cancer type.
In addition, these relations are expected to also imply ``probability raising'' for a pair of events in the following sense:
Namely, a selectivity relation between a pair of events here signifies that
the presence of the earlier genomic alteration  (i.e., the
\emph{upstream event}) that is advantageous in a
Darwinian competition scenario
increases the probability with which a subsequent advantageous
genomic alteration (i.e., the \emph{downstream event})
appears in the clonal evolution of the tumor. Thus the selectivity relation captures the effects of the
evolutionary processes, and not just correlations among the events and imputed clocks associated with them. As an example, we show in (Figure~\ref{fig:overview}) the selectivity relation connecting a mutation of \textsc{egfr} to the mutation of \textsc{cdk}.

Consequently, an inferred selectivity relation suggests
mutational profiles in which certain samples (early-stage patients)
 display specific alterations only (e.g., the alteration
characterizing the beginning of the progression), while certain other
samples (e.g., late-stage patients) display a superset subsuming the early
mutations (as well as alterations that occur subsequently in the progression).

Various kinds of genomic aberrations are suitable as input data, and
include somatic point/indel mutations, copy-number alterations, etc., provided that
they are \emph{persistent}, i.e., once an alteration is acquired
no other genomic event can restore the cell to the non-mutated
(i.e., \emph{wild type}) condition\footnote{For instance,
  epigenetic alterations such as methylation and  alterations in gene
  expression are not directly usable as input data  for the
  algorithm.  Notice that the selection of the relevant events  is
  beyond the scope of this work and requires a further upstream
  pipeline, such as that provided, for instance,
  in \cite{Tamborero:2013nx,Vogelstein29032013}.}.

The selectivity \Newrev{relations} that CAPRI
reconstructs are ranked and subsequently further refined by means of a
hybrid algorithm, \Newrev{which reasons over time, mechanism and chance, as follows.}
CAPRI's overall scoring methods combine topological constraints
grounded on Patrick Suppes' conditions of probabilistic causation (see
e.g., \cite{Suppes70}), with a \emph{maximum likelihood-fit} procedure
(cfr., \cite{bayesian_learning}) and derives much of its statistical
power from the application of \emph{bootstrap} procedures (see e.g.,
\cite{efron_1982}).
CAPRI returns a \emph{graphical model} of a complex selectivity
relation among events which captures the essential aspects of cancer
evolution: branches, confluences and independent progressions. In the
specific case of confluences, CAPRI's ability to infer them is related
to the complexity of the ``patterns'' they exhibit, expressed in a
logical fashion.  As pointed out by other approaches (cfr.,
\cite{beerenwinkel_2007}), this strategy requires trading off
complexity for expressivity of the inferred models, and results in two
execution modes for the algorithm: supervised and unsupervised, which
we discuss in details in Sections~\ref{sect:approach}
and~\ref{sect:methods}.

In Section~\ref{sect:methods} (Methods) we show that CAPRI enjoys a
set of attractive properties in terms of its complexity, soundness and
expressivity, even in the presence of uniform \emph{noise} in the input
data -- e.g., due to ~\emph{genetic heterogeneity} and experimental errors.
Although many other approaches enjoy similar asymptotic properties, we
show that CAPRI can compute accurate results with surprisingly
small sample sizes (cfr., Section~\ref{sect:discussion}).  Moreover, to the best of
our knowledge, based on extensive synthetic data simulations, CAPRI
outperforms all the competing procedures with respect to all desirable 
performance metrics. We conclude by showing an application of CAPRI to
reconstruct a progression model for \emph{atypical Chronic Myeloid
  Leukemia} (aCML) using a recent exome sequencing dataset, first presented in
\cite{piazza_nat_gen}.

\subsection{State of the Art}
\label{sect:state-of-the-art}

For an extensive review on \emph{cancer progression model
  reconstruction} we refer to the recent survey by \cite{Beerenwinkel:2014eb}.
In brief, progression models for cancer have been studied starting with the
seminal work of \cite{vogelstein1988genetic} where, for the first
time, cancer progression was described in terms of a directed path by assuming
the existence of a unique and most likely temporal order of genetic mutations.
\cite{vogelstein1988genetic} manually created a (colorectal) cancer
progression from a genetic and clinical point of view. More rigorous and complex
algorithmic and statistical automated approaches have appeared subsequently.
As stated already, the earliest thread of research simply sought more generic
progression models that could assume tree-like structures.
%
%
The \emph{oncogenetic tree model} captured evolutionary branches of
mutations (cfr., \cite{desper_1999,szabo}) by optimizing a
\emph{correlation}-based score.  Another popular approach to reconstruct
tree structures appears in \cite{desper_2000}. Other general Markov chain models such as, e.g., \cite{hjelm2006new} reconstruct more flexible probabilistic networks, despite a computationally expensive parameter estimation.
In \cite{Loohuis:2014im},
we introduced an algorithm called \emph{CAncer
  PRogression Extraction with Single Edges} (CAPRESE), which, based on its extensive empirical analysis, may be deemed as the
current state-of-the-art algorithm for the inference of tree models of
cancer progression. It is based on a shrinkage-like statistical estimation, grounded
in a general theoretical framework, which we extend further in this paper.  Other
results that extend tree representations of cancer evolution
exploit mixture tree models, i.e., multiple oncogenetic trees, each of
which can independently result in cancer development (cfr.,
\cite{beerenwinkel_2005}).
In general, all these methods are capable of modeling diverging
temporal orderings  of events in terms of branches, although the
possibility of converging evolutionary  paths is precluded. 

To overcome this limitation, the most recent approaches tends to adopt
Bayesian graphical models, i.e., Bayesian Networks (BN).  In the
literature, there have been two initial families of methods aimed at
inferring the structure of a BN from data (cfr.,
\cite{bayesian_learning}).  The first class of models seeks to explicitly capture
all the conditional independence relations encoded in the edges and
will be referred to as \emph{structural approaches}; the methods in
this family are inspired by the work on causal theories by Judea Pearl
(cfr.,
\cite{pearl1988probabilistic,pearl,spirtes2000causation,tsamardinos2003algorithms}).
The second class -- \emph{likelihood approaches} -- seeks a
model that maximizes the likelihood of the data (cfr.,
\cite{bic_1978,bde_1995,carvalho2009scoring}).

A more recent \emph{hybrid approach} to learn a BN  
which combines the two families above by $(i)$ constraining the
search space of the valid solutions  and, then, $(ii)$ fitting the
model with likelihood maximization (see \cite{beerenwinkel_2007,gerstung2009quantifying,misra2014inferring}).
A further technique to reconstruct progression models from cross-sectional data was introduced in \cite{attolini2010mathematical}, in which the transition probabilities between genotypes are inferred by defining a Moran process that describes the evolutionary dynamics of mutation accumulation. In \cite{cheng2012mathematical} this methodology was extended to account for pathway-based phenotypic alterations.





\section{Approach}
\label{sect:approach}

In what follows, we  denote with $\Probab{\cdot}$ and
$\Pcond{\cdot}{\cdot}$ the observed  marginal and conditional
probability of an event, whose complement is denoted with the
diacritical mark  $\~\cdot$ (macron).

\paragraph{\bfseries A probabilistic model of selective advantage.}
Central to 
CAPRI's score function is Suppes' notion of
\emph{probabilistic causation} (cfr., \cite{Suppes70}), 
which can be stated in
the following terms: a selectivity relation\footnote{Suppes presents the relation in terms of causality; however, we avoid Suppes' terminology as we build on just two of his many axioms, which only give rise to the notion of {\em prima-facie\/} causality.}
among two observables $i$ and $j$ if
(1) $i$ occurs earlier than $j$ -- \emph{temporal priority} (\TP) --
and (2) if the probability of observing $i$ raises the probability of
observing $j$, i.e., $\Pcond{j}{i} > \Pcond{j}{\~i}$ --
\emph{probability raising} (\PR).
The definition of probability raising subsumes positive
statistical dependency and mutuality (see, e.g., \cite{Loohuis:2014im}).
Note that the resulting relation (also, called prima facie causality) is purely
observational and remains agnostic to the possible mechanistic
cause-effect relation involving $i$ and $j$.

While Suppes' definition of probabilistic causation has known limitations in the context of general
causality theory (see discussions in, e.g.,
\cite{probabilistic-causation,kleinberg-thesis}), 
in the context of cancer evolution, this relation appropriately describes various features of
\emph{selective advantage} in somatic alterations that accumulate
as tumor progresses. 

Thus, in our framework,  we implement the temporal priority among
events -- condition (1) -- as $\Probab{i} > \Probab{j}$, because it is
intuitively sound to assume that the (cumulative) genomic events
occurring earlier are the ones present in higher frequency in a
dataset. 
In addition, condition (2) is implemented as is\Newrev{, that is by requiring that for each pair of 
observables $i$ and $j$ directly connected, $\Pcond{j}{i} >
\Pcond{j}{\~i}$ is verified}.
Taken together, these conditions 
gives rise to a natural ordering relation among events, written ``$i
\causes j$'' and read as ``$i$ has a selective influence on $j$.''  
This relation is a \emph{necessary} but \emph{not
  sufficient} condition to capture the notion of selective advantage,  and additional
constraints need to be imposed to filter spurious relations. 
Spurious correlations are both intrinsic to the
definition (e.g., if $i \causes j \causes w$ then also $i \causes w$,
which could be spurious) and to the model we aim at inferring, because
data is finite as well as corrupted by noise.

Building on this framework, we
devise inference algorithms that capture the essential aspects of heterogeneous
cancer progressions: \emph{branching}, \emph{independence} and
\emph{convergence} -- all combining in a progression model.

\begin{figure}[t]
  \begin{center}
    \includegraphics[width=0.35\textwidth]{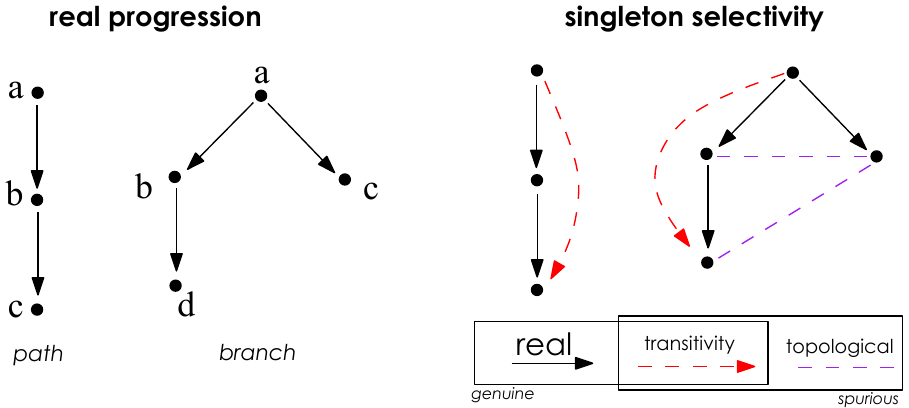} 
  \end{center}
  \begin{center}
    \tiny
    \begin{tabular}{ l ||  l }
      {\ } & \emph{temporal priority}  \\ \hline
      path &  $\Probab{a} > \Probab{b} > \Probab{c}$  \\
      branch & $\Probab{a} > \Probab{b} > \Probab{d}$
               and $\Probab{a} > \Probab{c}$    
    \end{tabular}
  \end{center}
  $\;$ \\
  \begin{center}
    \includegraphics[width=0.35\textwidth]{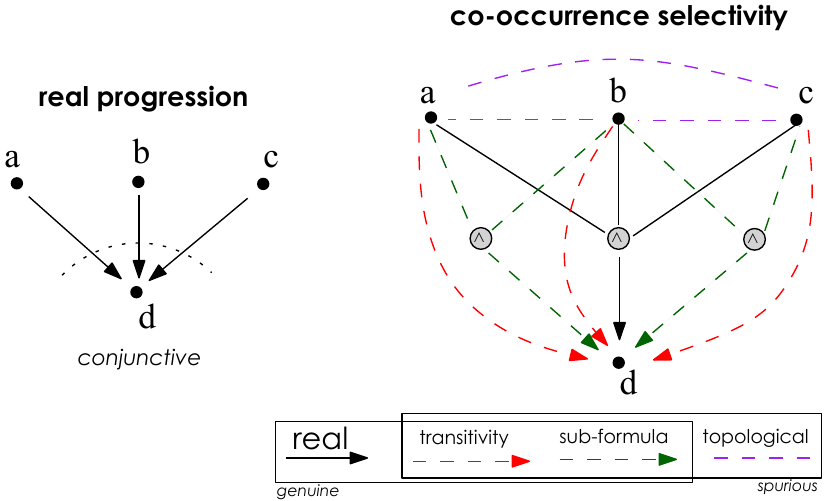} 
  \end{center}
  \begin{center}

    \tiny 
    \begin{tabular}{ l ||  l }
      {\ } & \emph{temporal priority}  \\ \hline
      co-occurrence & $\Probab{a} > \Probab{d}$
                      and $\Probab{b} > \Probab{d}$
                      and $\Probab{c} > \Probab{d}$     
    \end{tabular}
  \end{center}
    %
  \caption[Singleton and co-occurrence selectivity patterns.]{\textbf{Singleton and Co-occurrence Selectivity Patterns.}
   Examples of patterns that CAPRI can automatically extract without prior hypotheses.
    \emph{(Top):} A linear path and branching model (left) and
    corresponding singleton selectivity patterns with infinite sample
    size (right). All the genuine connections are shown (red and black, directed
    by the temporal priority), as well as edges
    (purple, undirected) which might be suggested by the topology (or
    observations, if data were finite).
    \emph{(Bottom):} Example of conjunctive model ($a$ \emph{and} $b$
    \emph{and} $c$). The co-occurrence selectivity pattern is shown, with
    all true patterns and infinite sample size. The topology is augmented
    by logical connectives; green arrows are spurious patterns emerging from
    the structure of the true pattern $a \wedge b \wedge c \causes d$.
      }
  \label{fig:single-cause}
\end{figure}

\paragraph{\textbf{Progression patterns.}}
The complexity of cancer requires modeling multiple
non-trivial \emph{patterns} of its
progression: for a specific event, a pattern is defined as a specific
combination of the closest upstream events that confers a selective
advantage.

 As an example, imagine a clonal subpopulation becoming fit -- thus
 enjoying expansion and selection -- once it acquires a mutation of
 gene $c$, provided it also has previously acquired a
mutation in a gene in the upstream
$a$/$b$ pathway. In terms of progression, we would like to capture the
 trajectories: $\{a,\neg b\}, \{\neg a, b\}$ and $\{a,b\}$ precedes
 $c$ (where $\neg$ denotes the absence of an event in the gene).

To establish this analysis formally, we augment
our
model of selection in a tumor with a language built from simple
propositional logic formulas using the usual Boolean connectives: namely,
``and'' ($\wedge$), ``or'' ($\vee$) and ``xor''
($\oplus$).  These patterns can be
described by formul\ae\ in a propositional logical language, which can be
rendered in \emph{Conjunctive Normal Form} (CNF). A CNF formula
$\varphi$ has the following syntax: $\varphi = \cc_1 \wedge \ldots
\wedge \cc_n$, where each $\cc_i$ is a \emph{disjunctive clause}
$\cc_i = c_{i,1} \vee \ldots \vee c_{i,k}$ over a set of literals,
each literal representing an event or its negation.
Given this (rather obvious) pattern representation,
we write the conditions for 
\emph{selectivity with patterns} as 
\begin{equation}
  \varphi \causes e \iff \Probab{\varphi} > \Probab{e} \text{ and } \big[\Pcond{e}{\varphi}
  > \Pcond{e}{\~\varphi} \big]\, ;
\end{equation}
with respect to the example above, patterns \footnote{Note that
  the conjunction $\wedge$ in our setting is interpreted
  differently from the classical notion (and the one adopted in e.g., \cite{gerstung2009quantifying}) since $a \wedge b \causes
c$ implies $a \causes c$ and $b \causes c$ in our framework. See also
\cite{Beerenwinkel:2014eb}.
Moreover, note that the scope of
  this study is intentionally kept limited from further generalization
  of formul\ae\; i.e., we will not consider statements of the form
  $\varphi_i \causes \varphi_j$, where the rightmost argument is a
  formula too.}
  could be $a \vee b \causes
c$ and $a \oplus b \causes c$
  

In our framework  the problem of reconstructing a probabilistic
graphical model of progression reduces to the following:
for each input event $e$, assess a \emph{set of selectivity patterns}
$\{\varphi_1 \causes e, \ldots, \varphi_k \causes e\}$,
filter the spurious ones,
and combine the rest in a \emph{direct acyclic graph}
\Newrev{(DAG)}\footnote{\Newrev{A DAG is formed by a set of nodes
       and  oriented  edges connecting one node to another, such that
       there are no directed loops among them. See SI Section 1 for a
       technical definition.}},
augmented with logical symbols.
Notice that while we broke down the progression extraction into a series of
sub-tasks, the problem remains complex:
patterns are unknown, potentially spurious, and exponential in formula
size; data is noisy; patterns must allow for
``imperfect regularities'', rather than being strict\footnote{This
  statement implies that there could be samples -- i.e., patients -- contradicting
  a pattern which still remains valid at a population level. 
 For this reason a
  pattern $x \wedge y \causes z$ is sometimes called a ``noisy
  and''.}.
To summarize, in our setting we can model complex progression trajectories
with branches (i.e., events involved in various patterns), independent
progressions (i.e., events without common ancestors) and convergence
(via CNF formulas).  The framework we introduce here is highly versatile, and 
to the best of
our knowledge, it
infers and checks more complex claims than any cancer progression
algorithms described thus far
(cfr.,\cite{desper_1999,gerstung2009quantifying,Loohuis:2014im}).





\section{Methods}
\label{sect:methods}

\begin{figure*}[t]
  \begin{center}
    \includegraphics[width=0.80\textwidth]{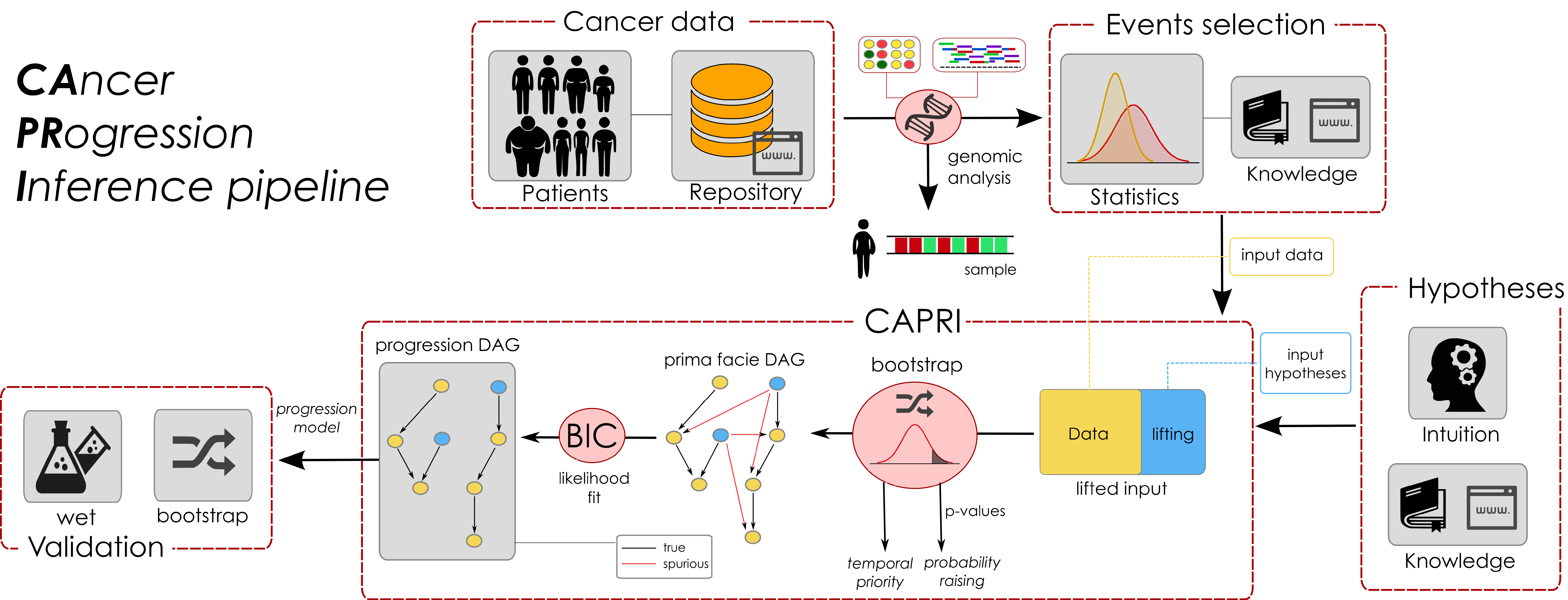}
  \end{center}
  \vspace*{.05in}
  \caption{\Newrev{\textbf{Data processing pipeline for cancer progression inference.} We sketch a pipeline to best exploit \ALGNAME{}'s ability to extract cancer progression models from cross-sectional data. Initially, one collects  \emph{experimental data} (which could be accessible through publicly available repositories such as TCGA) and performs  \emph{genomic analyses} to derive profiles of, e.g., somatic mutations or Copy-Number Variations for each patient. Then, statistical analysis and biological priors are used to select events relevant to the progression and imputable by  \ALGNAME{} - e.g., {\em driver mutations}. To exploit \ALGNAME{}'s supervised execution mode (see Methods) one can use further statistics and priors to generate \emph{patterns of selective advantage} - , e.g,  hypotheses of mutual exclusivity.  \ALGNAME{} can extract a progression model from these data   and assess various  \emph{confidence} measures on its constituting relations -  e.g., (non-)parametric  bootstrap and hypergeometric testing.  \emph {Experimental validation} concludes the pipeline.}}
  \label{fig:pipeline}
\end{figure*}

Building on the framework described in the previous section, we now describe the implementation of CAPRI's
building blocks. 
\Newrev{Notice that, in general, the inference of cancer progression models requires a complex \emph{data processing pipeline}, as summarized in Figure \ref{fig:pipeline};  its architecture optimally exploits \ALGNAME's efficiency.}

\paragraph{\textbf{Assumptions.}}
CAPRI relies on the following assumptions:
$i)$ Every pattern is expressible as a propositional CNF formula;
$ii)$ All events are persistent, i.e., an acquired mutation
cannot disappear;
$iii)$ All relevant events in tumor progression are observable, with
the observations describing the progressive
phenomenon in an essential manner (i.e., \emph{closed world} assumption,
in which all events `driving' the progression are detectable);
$iv)$ All the events have non-degenerate observed probability in
$(0, 1)$;
$v)$ All events are distinguishable, 
\Newrev{in the following sense: input alterations produce different profiles across input samples}. 
%
%
%
Assumptions $i$-$ii)$ relate to the framework derived in previous
section, while $iii)$ imposes an onerous burden on the
experimentalists, who must select the \emph{relevant}
genomic events to model\footnote{\Newrev{Theoretically, this assumption 
- common to other Bayesian learning problems -  is necessary to prove CAPRI's ability to extract 
the exact model in the optimal case of infinite samples. Practically,
as all \emph{relevant} events are hardly
selectable a priori and sample size is finite, further statistics can be used to select the most relevant driver alterations -- see
also Section~\ref{sect:discussion}, Results and Discussion. Nonetheless, CAPRI can provide significant results even if this assumption 
is not or cannot be verified.}}. \Newrev{Assumption $iv)$ relates instead to the statistical 
distinguishability of the input events (see the next section on CAPRI's Data Input).
}.

\paragraph{\textbf{Trading Complexity for Expressivity.}}
%
To automatically extract
the patterns that underly a progression model, one may try to adopt a brute-force
method of enumerating and testing all possibilities. This strategy is computationally intractable, however, since the number of (distinct)
(sub)formul\ae\ grows exponentially with the number of events included
in the model.  
Therefore, we need to
exploit certain properties of the $\causes$ relation whenever possible, and trade
expressivity for complexity in other cases, as explained below.

Note that \emph{singleton} and \emph{co-occurrence} ($\wedge$)
types of patterns are amenable to \emph{compositional reasoning}: if
$i_1 \wedge \ldots \wedge i_k \causes j$ then, for any $p = 1,\ldots, k$,
$i_p \causes j$.  This observation leads to the following
straightforward strategy of evaluating every
conjunctive (and henceforth singleton) relation using a pairwise-test
for the selectivity relation (see Figure~\ref{fig:single-cause}).

Unfortunately, it is easy to see that this reasoning fails to generalize for CNF patterns: e.g., when the
pattern contains disjunctive operators ($\vee$).  
As an example,
consider pattern  $a \vee b
\causes c$, in a cancer where
$\{a, \neg b\}$ progression to $c$ is more prevalent
than $\{\neg a, b\}$ and $\{ a, b\}$. In this case, considering
sub-formulas only we might find
$ a \causes c$ but miss
$b \causes c$ because the probability of
mutated $b$ is smaller than that of $c$,  
thus invalidating condition $(1)$ of relation $\causes$.  Notice that in extreme
situations, when the data is very noisy,  the algorithm may even ``invert'' the selectivity
 relation to $ c \causes b$.

This difficulty is not a peculiarity of our framework, but rather
intrinsic to the problem of extracting complex ``causal networks''
(cfr., \cite{pearl1988probabilistic,pearl,kleinberg-thesis}). To handle this situation, CAPRI adapts a strategy that trades
complexity for expressivity: 
\Newrev{the resulting inference procedure},
Algorithm~\ref{alg:inference}, can be executed in two modes: 
 unsupervised and supervised. In the former, inferred 
patterns of confluent progressions are constrained to co-occurrence 
types of relations, in the latter CAPRI can test  more complex patterns, 
i.e., disjunctive or \Newrev{``mutual exclusive''} ones, provided they are  given as prior
hypotheses. In both cases, CAPRI's complexity -- 
studied in next sections -- is quadratic both in the number of events
and hypotheses.

\paragraph{\textbf{Data Input (Step 1).}} 
%
%
\ALGNAME{} (cfr., Algorithm~\ref{alg:inference}) requires an input set $G$
of $n$ events, i.e., genomic alterations, and $m$ cross-sectional
samples, represented as a dataset in an $m \times n$ binary matrix $D$,
in which an entry $D_{i,j} = 1$ if the event $j$ was observed in
sample $i$, and $0$ otherwise. \Newrev{Assumption $iv)$ is satisfied 
when all columns in $D$ differ - i.e., the alteration profiles  yield 
different observations.}

Optionally, a set of $k$ input hypotheses  $\Phi= \{ \varphi_1 \causes
e_1, \ldots, \varphi_k \causes e_k\}$, where each $\varphi_i$ is a
well-formed\footnote{Formally, we require that $\varphi_i \not
  \sqsubseteq e_i$, where $\sqsubseteq$ represents the usual \emph{syntactical}
  ordering relation among \at{} events and formulas, and disallows for
  example 
$a \vee b \causes a$.} CNF
formula.
Note that we advise that the algorithm be used in the following regime~\footnote{
\Newrev{In the current biomedical setting, the number of samples ($m$) is usually in the hundreds, while number of possible mutations ($n$) and hypotheses ($k$), absent any pre-processing, could be large, thus violating the assumption; in these cases, we rely on various commonly used pre-preprocessing filters to limit $n$ to driver mutations, and $k$ to simple hypotheses involving the driver mutations. However, in the future as the number of samples increases, we envision a more agnostic application}.
}: $k+n \ll m$.

\paragraph{\textbf{Data Preprocessing (Lifting, step 2).}}
When input hypotheses are provided (e.g., by a domain expert), \ALGNAME{} first performs a
\emph{lifting operation}
  over
$D$ to permit direct inference of complex selectivity relations over a
joint representation, which involve input events as well as the hypotheses.  Lifting
operation evaluates  each input CNF formula -- for all  input
hypotheses in $\Phi$ -- and outputs a lifted matrix $D(\Phi)$ to be processed further as in step 1.
As an example, consider hypothesis
$a \oplus b \causes c$
lifted input matrix $D$ is:

\begin{scriptsize}
\[
D(\Phi) = \left[
  \begin{array}{ccc|c}
    a & b  & c & a \oplus b \causes c\\ \hline
    1 & 1 & 1 & 1 \oplus 1 = \mathbf{0}\\ 
    1 & 0 & 1 & 1 \oplus 0 = \mathbf{1}\\ 
    0 & 1 & 0 & 0 \oplus 1 = \mathbf{1}\\ 
    1 & 0 & 1 & 1 \oplus 0 = \mathbf{1}\\ 
  \end{array} 
\right]\, .
\] 
\end{scriptsize}

\noindent
Note that the first row (profile 
$\{a, b, c\}$
) contradicts the hypothesis,
while all other rows support it.

\paragraph{\textbf{Selectivity Topology (steps 3, 4, 5).}}
%
We exploit a compositional approach to 
test CNF hypotheses as follows: the disjunctive
relations are grouped, and treated as if they were individual objects
in $G$. For example, when a formula $\varphi \causes d$ where $\varphi= (a \vee b) \wedge c$
is considered, we assess $\varphi \causes d$ as whether $(a \vee b)
\causes d$ and $c \causes d$ hold -- with the proviso that we treat
$(a \vee b)$ as an individual event. Formally, with $\conj{\varphi}$
we denote the disjunctive clauses in a CNF formula. 

Nodes in the reconstruction are 
all input events together with all the disjunctive
clauses of each input formula $\varphi$.  

Edges in the reconstructed DAG are patterns that satisfy 
both conditions (1) and (2) of the selectivity relation
$\causes$. Formally, CAPRI includes an edge
between two nodes $\varphi$ and $j$
only if both $\Gamma_{\varphi,j} = \Probab{\varphi} - \Probab{j}$ and $\Lambda_{\varphi,j} =
      \Pcond{j}{\varphi} - \Pcond{j}{\~ {\varphi}}$ are strictly positive. Note
      that $\varphi$ can be both a disjunctive clause as well as a
      singleton event. 
%
A function $\pi(\cdot)$ assigns a parent to 
each node that is not an input formula. 
Note that this approach works efficiently by nature of the lifted
representation of $D$. 
The reconstructed DAG contains all
the true positive patterns, with respect to $\causes$, plus spurious
instances of $\causes$ which CAPRI subsequently removes in step~6
\Newrev{(cfr., the Supplementary Material for a proof of this statement)}.

Note that $\mathcal{D}$ can be readily interpreted as a
probabilistic graphical model, once it is augmented with a labeling
function $\alpha: N \to [0,1]$\Newrev{, where $N$ is the set of nodes
  -- i.e., the genetic alterations --} such that $\alpha(i)$ is the
\emph{independent probability} of observing mutation $i$ in a sample,
whenever \emph{all of its parent} mutations (i.e., $\pi(i)$) are
observed (if any). Thus $\mathcal{D}$  induces a \emph{distribution}
of observing a subset of events in a set of samples (i.e., a
probability of observing a certain  \emph{mutational profile} in a
patient).

\paragraph{\textbf{Maximum Likelihood Fit (step 6).}}
As the selectivity relation provides only a \emph{necessary}
condition, we must filter out all of its \emph{spurious instances}
that might have been included in $\mathcal{D}$ (i.e., the possible
\emph{false positives}).

For any selectivity structure, spurious claims contribute to
a reduction in the \emph{likelihood-fit} relative to true patterns. 
Thus, a
standard maximum-likelihood fit can be used to select and prune the
selectivity DAG (including a \emph{regularization term} to avoid
over-fitting\footnote{\Newrev{In principle other regularisation strategies common
to Bayesian learning could be used, e.g., Akaike information criterion (see 
\cite{carvalho2009scoring} and references therein). 
In this paper, we prefer to work with BIC which, in general, 
trades model complexity to reduce false positives rate.}}). 
Here, we  adopt the \emph{Bayesian
  Information Criterion} (BIC), which implements \emph{Occam's razor}
by combining log-likelihood fit with a \emph{penalty criterion}
proportional  to the $\log$ of the DAG size via \emph{Schwarz
  Information Criterion} (see \cite{bic_1978}). The BIC score is defined
as follows.
%
\begin{equation}\label{eq:bic}
  \textsc{bic}\left(\mathcal{D},\, D(\Phi)\right)
  = \mathcal{LL}\left(\mathcal{D},\, D(\Phi)\right) - \dfrac{\log m}{2}\, \mathrm{dim}(\mathcal{D}). 
\end{equation}
Here, $D(\Phi)$ is the lifted input matrix, $m$ denotes the number of
samples and  $\text{dim}(\mathcal{D})$ is the number of parameters in
the model $\mathcal{D}$. Because, in general, $\text{dim}(\cdot)$
depends on the number of parents each node has, it is a good metric
for model complexity. Moreover, since each edge added to $\mathcal{D}$
increases model complexity, the regularization term based on
$\text{dim}(\cdot)$ favors graphs with fewer edges and, more
specifically, fewer parents for each node.

At the end of this step, $\mathcal{D}$ and the labeling function are
modified accordingly, based on the result of BIC regularization. By
collecting all the incoming edges in a node it is possible to extract
the patterns, which have been selected by CAPRI as the positive ones.

\begin{algorithm}[t]
  \caption{\emph{\underline{CA}ncer \underline{PR}ogression
      \underline{I}nference} (\ALGNAME{})}
  \label{alg:inference}
  \begin{algorithmic}[1]
    \begin{scriptsize}

      \STATE \textbf{Input:} {A set of events $G=\{g_1,\ldots,g_n\}$,
        a matrix $D \in \{0,1\}^{m \times n}$} and $k$ CNF causal
      claims $\Phi=\{\varphi_1 \causes e_1, \ldots, \varphi_k \causes
      e_k\}$ where, for any $i$, $e_i \not \sqsubseteq \varphi_i$ and $e_i \in G$;

      \STATE [\emph{Lifting}] Define the \emph{lifting of $D$ to
        $D(\Phi)$} as the augmented matrix
      \begin{align*}\label{eq:lifting}
        D(\Phi) = 
        \left[ \begin{array}{ccc|ccc}
                 D_{1,1}  & \ldots & D_{1,n} & \varphi_{1}(D_{1,\cdot}) & \ldots &\varphi_{k}(D_{1,\cdot}) \\
                 \vdots & \ddots & \vdots & \vdots & \ddots & \vdots \\
                 D_{m,1}  & \ldots & D_{m,n} & \varphi_{1}(D_{m,\cdot}) & \ldots &\varphi_{k}(D_{m,\cdot}) \\
               \end{array}
        \right].
      \end{align*}
      by adding a column for each $\varphi_i \causes c_i \in \Phi$,
      with $\varphi_i$ evaluated row-by-row. Define then the coefficients
      $\Gamma_{i,j} = \Probab{i} - \Probab{j}$ and $\Lambda_{i,j} =
      \Pcond{j}{i} - \Pcond{j}{\~ {i}}$ pairwise over $D(\Phi)$;

      \STATE[\emph{DAG nodes}] Define the set of nodes $N = G \cup
      \left(\bigcup_{\varphi_i} \conj{\varphi_i}\right)$ which
      contains both input events and the disjunctive clauses in every
      input formula of $\Phi$.

      \STATE[\emph{DAG edges}] Define  a parent function $\pi$ where
      $\pi(j \not \in G) = \emptyset$ -- avoid edges incoming in a
      formula 
        \footnotemark -- and
      \begin{align} 
        \pi(j \in G) & = 
                       \left\{ i \in G \mid \Gamma_{i,j}, \Lambda_{i,j} >0 \right\} \nonumber\\
                     & \cup \left\{ \conj{\varphi}
                       \mid \Gamma_{\varphi,j}, \Lambda_{\varphi,j} > 0, \;
                       \varphi \causes j \in \Phi \right\}.
      \end{align}
      Set the DAG to $\mathcal{D} = (N,\pi)$.
 
      \STATE[\emph{DAG labeling}] Define the labeling $\alpha$ as follows
      \[
      \alpha(j) = \begin{cases}
        \Probab{j}, & \text{ if } \pi(j) =\emptyset  \text{ and } j \in G; \\
        \Pcond{j}{ i_1 \wedge \ldots \wedge i_n},  & \text{ if } \pi(j) = \{i_1, \ldots, i_n\}.
      \end{cases}
      \]


      \STATE[\emph{Likelihood fit}] Filter out all spurious causes
      from $\mathcal{D}$ by likelihood fit with the regularization BIC
      score and set $\alpha(j)=0$ for each removed edge.


      \STATE \textbf{Output:} the DAG $\mathcal{D}$ and $\alpha$;
    \end{scriptsize}
  \end{algorithmic}
\end{algorithm}
\footnotetext{Although \ALGNAME{} is equipped with bootstrap testing it is still possible to encounter various
degenerate situations. In particular, for some pair of events it could be that temporal priority cannot be satisfactorily resolved, i.e. there is no significant $p$-value for any edge orientation. Thus, loops might be  present in the inferred prima facie topology. Nonetheless, some of these could be still disentangled by probability raising, while some might remain, albeit rarely. To remove such edges we suggest to proceed as follows: $(i)$  sort these edges according to their $p$-value (considering both temporal priority and probability raising), $(ii)$ scan the sorted list in decreasing order of confidence, $(iii)$ remove an edge if it forms a loop.}

\paragraph{\textbf{Inference Confidence: Bootstrap and Statistical
    Testing.}}
%
%
To infer confidence intervals of the selectivity relations $\causes$,
CAPRI employs \emph{bootstrap with rejection resampling} as follows\Newrev{, by estimating} a distribution of the marginal and joint probabilities. For
each event, $(i)$ CAPRI samples with repetitions rows from the input
matrix $D$ (bootstrapped dataset), $(ii)$ CAPRI next estimates the
distributions from the observed probabilities, and finally, $(iii)$ CAPRI
rejects values which do not satisfy $0 < \Probab{i} < 1$ and $\Pcond{i}{j}
< 1 \vee \Pcond{j}{i} < 1$, and iterates restarting from $(i)$. We stop
when we have, for each distribution, at least $K$ values (in our case
$K = 100$). Any
inequality (i.e., checking temporal priority and probability
raising) is estimated using the non-parametric Mann-Whitney U test\footnote{The Mann-Whithney U test is a rank-based non-parametric statistical
  hypothesis test that can be used as an alternative to the Student's
  t-test and is particularly useful if data are not normally distributed.} with $p$-values
set
to $0.05$.
We compute
confidence $p$-values for both temporal priority and probability
raising using this test, which need not assume Gaussian distributions
for the populations.

Once a DAG $\mathcal{D}$ is inferred both \emph{parametric and
  non-parametric bootstrapping methods} can be used to assign a
confidence level to its respective pattern and to the overall  model.
Essentially, these tests consist of using the reconstructed model (in
the parametric case), or the probabilities observed in the dataset (in
the non-parametric case) to generate new synthetic datasets, which are
then reused to reconstruct the progressions (see,
e.g., \cite{efron_2013} for an overview of these methods). The
confidence is estimated by the number of times the DAG or any instance of
$\causes$ is reconstructed from the generated data.

\paragraph{\textbf{Complexity, Correctness and Expressivity.}}
CAPRI has the following asymptotic complexity (Theorem 1, SI Section 2):

%
$(i)$ Without input hypotheses  the execution is self-contained and
polynomial in the size of $D$. 

$(ii)$ In addition to the above cost, \ALGNAME{} tests input hypotheses of
$\Phi$  at a polynomial cost in the size of $|\Phi|$. In this case,
however,  its complexity may range over many orders of magnitude
depending on the structural complexity of the input set $\Phi$
\Newrev{consisting of hypotheses}. 

\Newrev{An empirical analysis of the execution time of CAPRI and the competing techniques on synthetic datasets is provided in the SI, Section 3.5}.

\ALGNAME{} is a \emph{sound and complete} algorithm, and its
expressivity in terms of the inferred patterns is proportional to the
hypothesis set $\Phi$ which, in turn, determines the complexity of the
algorithm. With a proper set of input hypothesis, \ALGNAME{} can infer
all (and only) the true patterns from the data, filtering out all the
spurious ones (Theorem 2, SI Section 2).
  Without
hypotheses, besides singleton and co-occurrence, no other patterns can
be inferred (see Figure \ref{fig:single-cause}). Also, some of these
claims might be spurious in general for more complex (and unverified) CNF
formula (Theorem 3, SI Section 2).






\section{Results and Discussion}
\label{sect:discussion}

\begin{figure*}
  \begin{center}
    \includegraphics[width=0.80\textwidth]{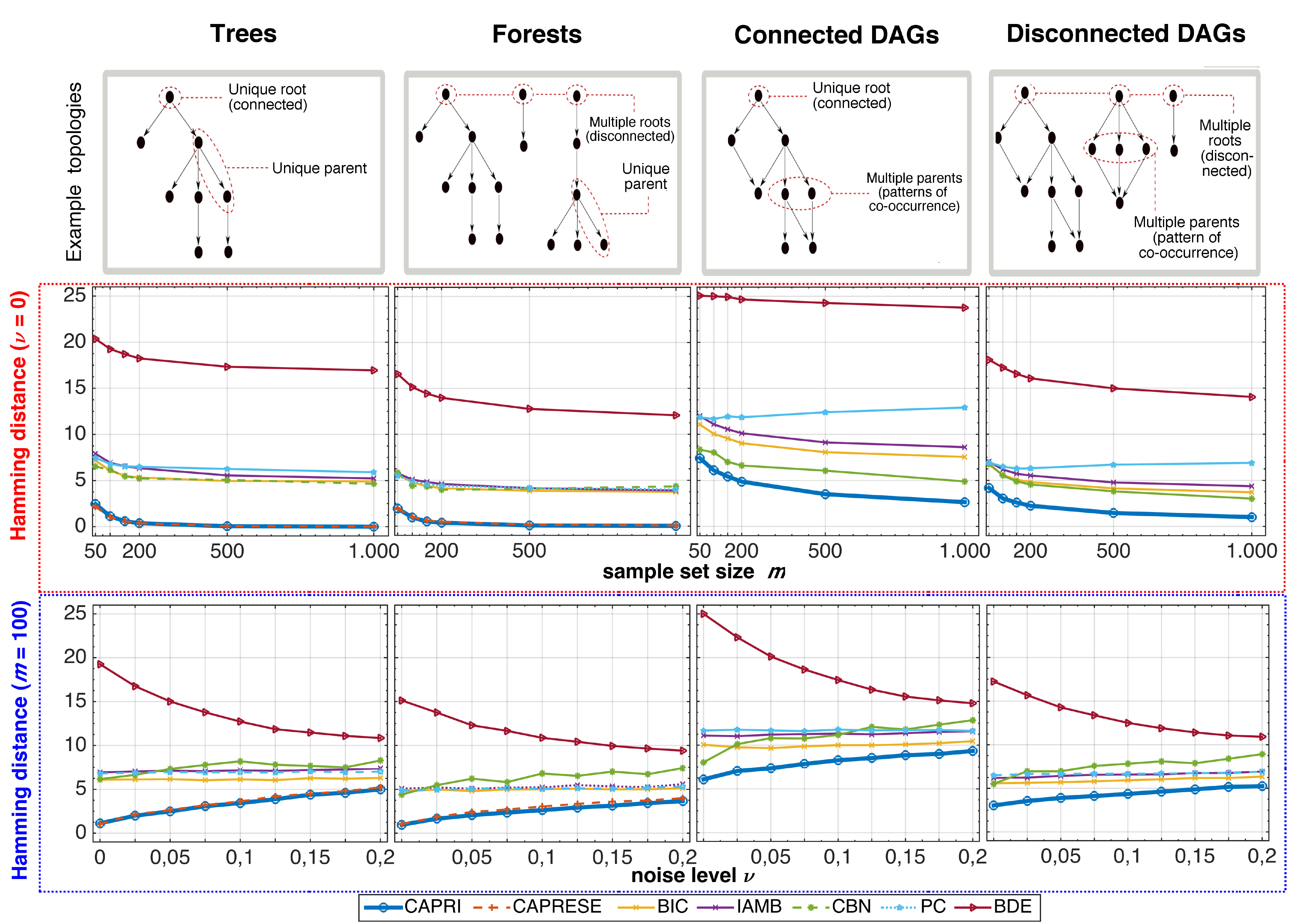}
  \end{center}
  \vspace*{.05in}
  \caption[Comparative Study]{\textbf{Comparative Study}. 
  Performance and accuracy of \ALGNAME{}
    (unsupervised execution) and other algorithms, IAMB, PC, BIC, BDE,
    CBN and CAPRESE{}, were compared using synthetic datasets sampled by a
    large number of randomly parametrized progression models -- \emph{trees},
    \emph{forests}, \emph{connected} and \emph{disconnected DAGs}, which
    capture different aspects of confluent, branched and heterogenous
    cancer progressions.  For each of those, $100$ models with $n=10$
    events were created and $10$ distinct datasets were sampled by each
    model. Datasets vary by number of samples $(m)$ and level of noise
    in the data $(\nu)$ -- see the Supplementary Information file for details.
    (\emph{Red box}) Average \emph{Hamming distance} (HD) -- with $1000$
    runs -- between the reconstructed and the generative model, as a
    function of dataset size $(m \in \{50, 100, 150, 200, 500, 1000
    \})$, when data contain no noise $(\nu = 0)$.  The lower the HD,
    the smaller is the total rate of mis-inferred selectivity relations
    among events. (\emph{Blue box}) The same is shown for a fixed sample
    set size $m = 100$ as a function of noise level in the data $(\nu
    \in \{0, 0.025, 0.05, \cdots, 0.2\})$ so as to account for input
    \emph{false positives} and \emph{negatives}. See SI Section 3 
for more extensive results on precision and recall scores and also including additional
    combinations of noise and samples as well as experimental settings.}
  \label{fig:performance}
\end{figure*}

To determine CAPRI's relative accuracy (true-positives and
false-negatives) and performance compared to the state-of-the-art
techniques for \emph{network inference}, we performed extensive
\emph{simulation experiments}. From a list of potential
competitors of CAPRI, we  selected:  \emph{Incremental Association
  Markov Blanket} (IAMB, \cite{tsamardinos2003algorithms}), the
\emph{PC algorithm} (see \cite{spirtes2000causation}), \emph{Bayesian
  Information Criterion} (BIC, \cite{bic_1978}), \emph{Bayesian
  Dirichlet with likelihood equivalence} (BDE, \cite{bde_1995})
\emph{Conjunctive Bayesian Networks} (CBN,
\cite{gerstung2009quantifying}) and \emph{Cancer Progression Inference  with Single Edges} (CAPRESE, \cite{Loohuis:2014im}). These
algorithms constitute a rich landscape of structural methods
(IAMB and PC), likelihood scores (BIC and BDE) and hybrid approaches
(CBN and CAPRESE).

%
\Newrev{Also, we applied CAPRI to the analysis of an atypical Chronic Myeloid Leukemia dataset
of somatic mutations — with data based on
\cite{piazza_nat_gen}.}

\subsection{Synthetic data}
\label{sect:comparison}

We performed extensive tests on a large number of \emph{synthetic
  datasets} generated by randomly parametrized progression models with
distinct key features, such as the presence/absence of: $(1)$
\emph{branches}, $(2)$ \emph{confluences with patterns of
  co-occurrence}, $(3)$ \emph{independent progressions} (i.e.,
composed of disjoint sub-models involving distinct sets of events).
Accordingly, we distinguish four classes of generative models with
increasing complexity and the following features:
\begin{center} \footnotesize
\begin{tabular}{c|c|c|c|c}
 & \emph{trees} & \emph{forests} & \emph{connected DAGs}  & \emph{disconnected DAGs} \\
 \hline
(1) & \okmark & \okmark & \okmark & \okmark \\
 \hline
(2) & \nomark & \nomark & \okmark & \okmark \\
 \hline
(3) & \nomark & \okmark & \nomark & \okmark \\
 \end{tabular}
\end{center}

%
The choice of these different type of topologies is not a mere
technical exercise, but rather it is motivated, in our application of
primary interest, by \emph{ heterogeneity of cancer cell types} and
\emph{possibility of multiple cells of origin}.

To account for \emph{biological noise} and \emph{experimental errors}
in the data we introduce a parameter $\nu \in (0,1)$ which represents
the probability of  each entry to be random in $D$, thus representing
a \emph{false positive}  ($\epsilon_+$)  and a \emph{false negative}
rate ($\epsilon_-$): $\epsilon_+ =\epsilon_- = {\nu}/{2}\,$.  The
noise level complicates the inference problem, since samples generated
from such topologies will likely contain sets of mutations that are 
correlated but causally irrelevant.

To have  reliable statistics in all the tests, $100$ distinct
progression models per topology are generated and, for each model, for
every chosen combination of sample set size $m$ and noise rate $\nu$,
$10$ different datasets are sampled (see SI Section 3 
for our synthetic data generation methods).


Algorithmic performance was evaluated using the metrics \emph{Hamming
  distance} (HD), \emph{precision} and
\emph{recall}, as a function of dataset size, $\epsilon_+$ and
$\epsilon_-$.  HD measures the \emph{structural similarity} among the
reconstructed progression and the generative model in terms of the
minimum-cost sequence of node edit operations (inclusion and exclusion)
that transforms the reconstructed topology into the generative
one\footnote{This measure corresponds to the sum of false positives
  and false negative and, for a set of $n$ events, is bounded above by
  $n(n-1)$ when the reconstructed topology contains all the false
  negatives and positives.}. Precision and recall are defined as
follows: \emph{precision} = TP/({TP} + {FP}) and \emph{recall} =
TP/({TP} + {FN}), where {TP} are the \emph{true positives} (number of
correctly inferred true patterns), {FP} are the \emph{false positives}
(number of spurious patterns inferred) and {FN} are the \emph{false
  negatives} (number of true patterns that are \emph{not} inferred
). The closer both precision and recall are to $1$, the
better.

%
In Figure \ref{fig:performance} we show the performance of CAPRI and
of the competing techniques, in terms of Hamming distance, on datasets
generated from models with $10$ events and all the four different
topologies.  In particular, we show the performance: $(i)$ in the case
of noise-free datasets, i.e., $\nu = 0$ and different values of the
sample set size $m$ and $(ii)$ in the case of a fixed sample set size,
$m = 100$ (size that is likely to be found in currently available
cancer databases, such as TCGA (cfr., \cite{tcga})) and different
values of the noise rate $\nu$. As is evident from Figure \ref{fig:performance} \ALGNAME{}
outperforms all the competing techniques with respect to all the
topologies and all the possible combinations of noise rate and sample
set size, in terms of average Hamming distance (with the only
exception of CAPRESE in the case of tree and forests, which displays a
behavior closer to CAPRI's). The analyses on precision and
recall 
 display consistent results (SI Section 3). In other words,
we demonstrate on the basis of extensive synthetic tests that CAPRI requires
a much lower number of samples than the other techniques in order to
converge to the real generative model and also that it is much more
robust even in the presence of significant amount of noise in the data, irrespective of the
underlying topology.

See SI Section 3
for a more complete description 
of the performance evaluation for all the
analyzed combinations of parameters. There, we have shown that \ALGNAME{} is highly
effective when the co-occurrence constraint on confluences is
relaxed to  \emph{disjunctive} patterns, 
\emph{ even if no input hypotheses are provided}, i.e.,
$\Phi=\emptyset$. This result hints at CAPRI's robustness to infer patterns
with imperfect regularities. Finally, we also show that \ALGNAME{} is
effective in inferring synthetic lethality relations in this case using the
operator $\oplus$ as introduced in
\Newrev{Section~\ref{sect:approach}, Approach}; when a
combination of mutations in two or more genes leads to cell death,
while separately, the mutations are viable. In this case, candidate
relations are directly input as $\Phi$. 

\begin{figure*}[t]
  \begin{center}
    \includegraphics[width=0.46\textwidth]{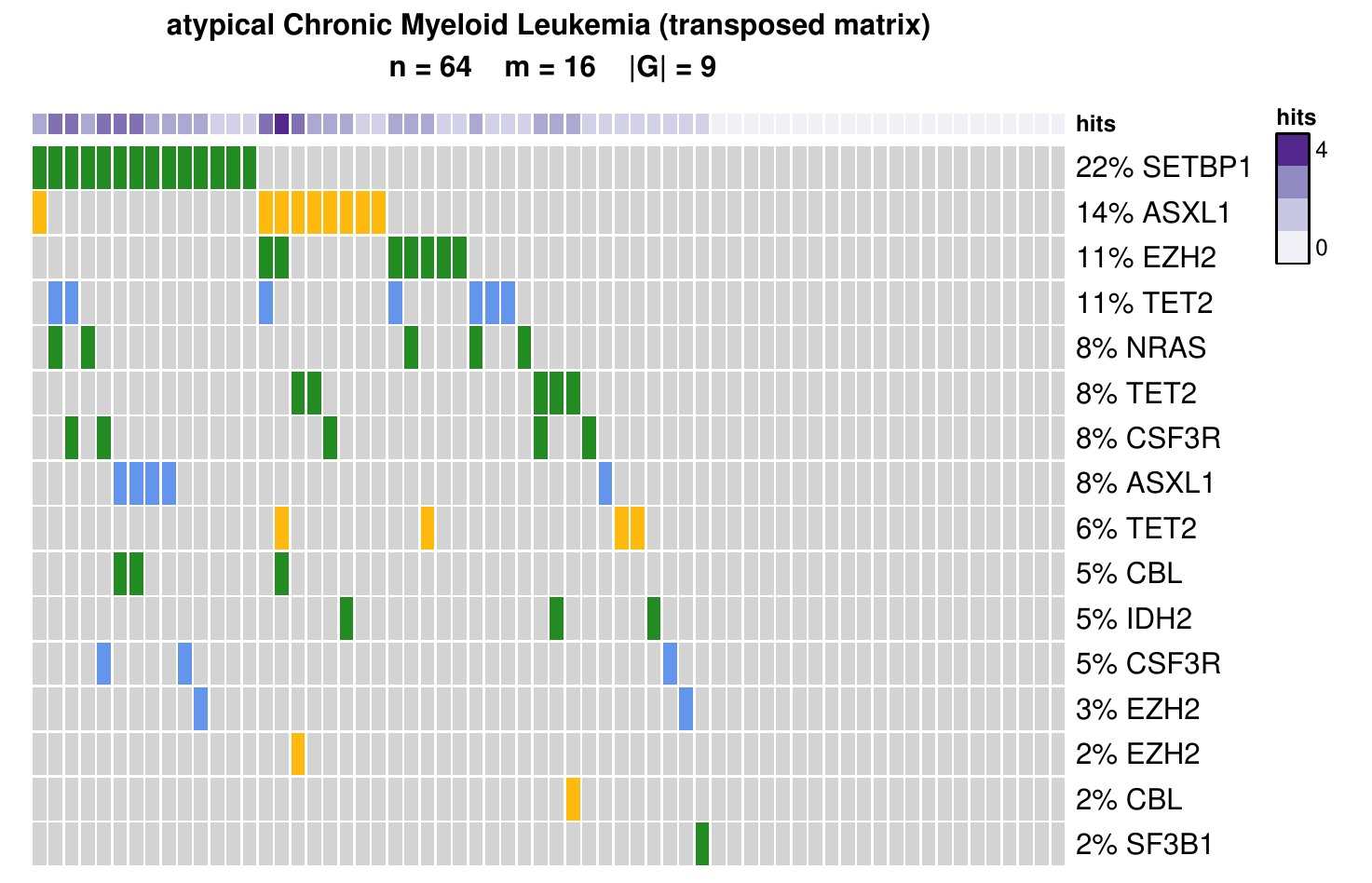}
    \includegraphics[width=0.53\textwidth]{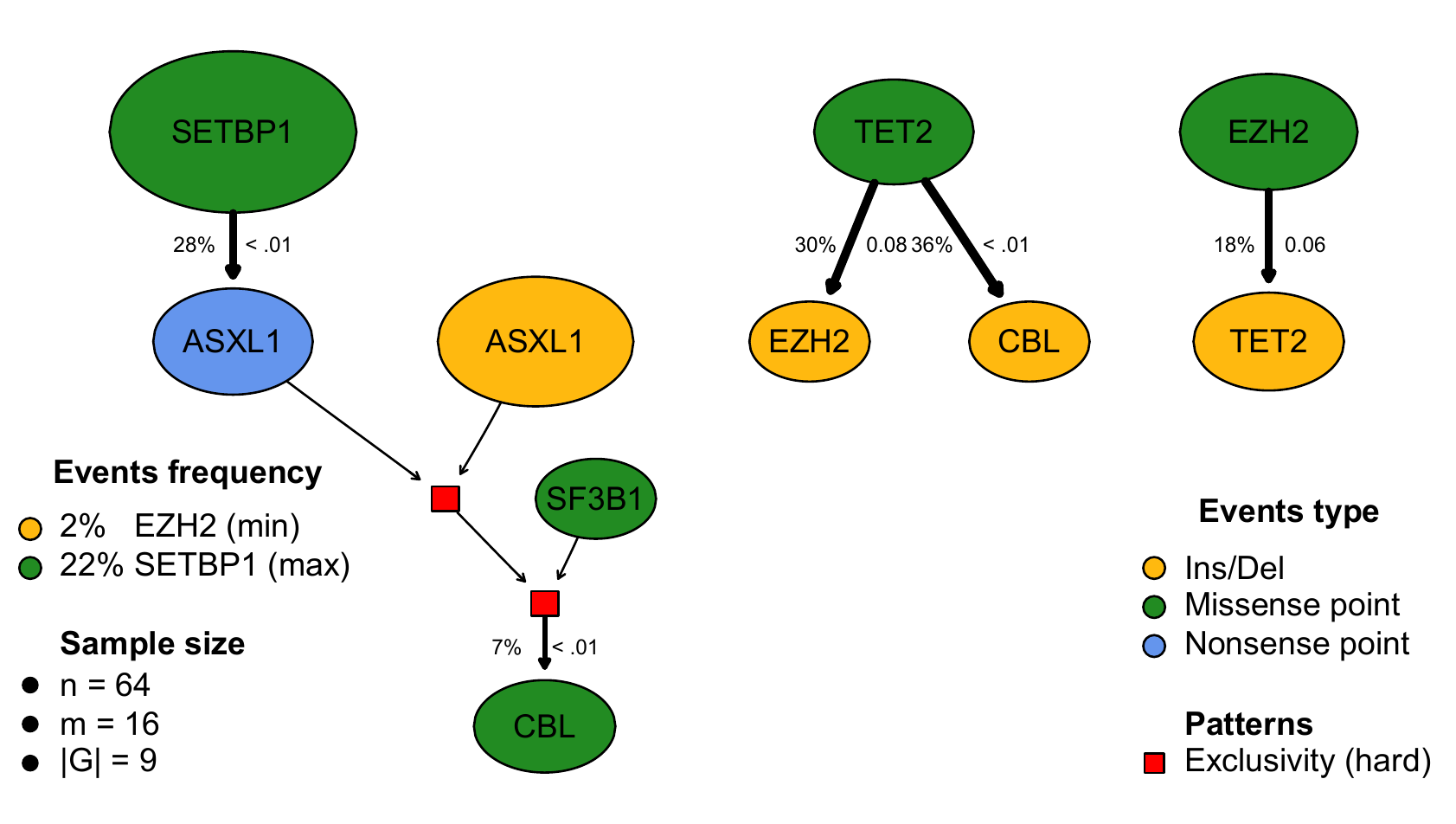}
  \end{center}
  \vspace*{.05in}
  \caption[Atypical Chronic Myeloid Leukemia ]{\Newrev{\textbf{Atypical Chronic Myeloid Leukemia}. 
  {\em(left)} Mutational profiles of $n=64$ \aCML{} patients - exome sequencing in \cite{piazza_nat_gen} - with alterations in $|G|=9$  genes with either mutation frequency $>5\%$ or 
  belonging to an hypothesis inputed to CAPRI (SI Section 4). Mutation types are classified as \emph{nonsense point}, \emph{missense point} and  \emph{insertion/deletions}, yielding $m=16$ input events. Purple annotations report the frequency of mutations per sample. {\em(right)} Progression model inferred by \ALGNAME{} in supervised mode.  Node size is proportional to  the marginal probability of each event,  edge thickness to the confidence estimated with $1000$ non-parametric bootstrap iterations (numbers shown leftmost of every edge). The p-value of the hypergeometric test is displayed too. Hard exclusivity patterns inputed to CAPRI are indicated as red squares. Events without inward/outward edges are not shown.}}
\label{fig:aCML_model}
\end{figure*}

\subsection{Atypical Chronic Myeloid Leukemia (aCML)}
\label{sect:aCML}

\Newrev{
As a case study, we applied CAPRI to the mutational profiles 
of $64$ \aCML{} patients described in \cite{piazza_nat_gen}. Through exome
sequencing, the authors identify a recurring \emph{missense
  point  mutation} in the \emph{SET-binding protein 1} (\setbp{}) gene as
a novel \aCML{} marker.

Among all the genes present in the dataset by Piazza {\em et al.},
we selected those either $(i)$ mutated - considered any mutation type
- in at least $5\%$ of the input samples ($3$ patients), or $(ii)$
hypothesised to be part of a functional \aCML{} progression pattern in
the literature \footnote{Two  \emph{hard exclusivity} patterns - i.e.,
  mutual exclusivity with ``xor'' - were tested, involving the
  mutations of: $(i)$ genes \asxl{} and  \sfb{} (see
  \cite{lin2014sf3b1}), which is present in the inferred progression
  model in Figure  \ref{fig:aCML_model}, and $(ii)$ genes \tet{} and
  \idh{} (see \cite{figueroa2010leukemic}). The syntax in which  the
  patterns are expressed is in the SI, Section 4.}. The input dataset
with selected events  is shown in Figure \ref{fig:aCML_model}; notice
that somatic mutations are categorised as \emph{indel},
\emph{missense point} and \emph{nonsense point} as in \cite{piazza_nat_gen}. In Figure \ref{fig:aCML_model} we show the
model reconstructed by CAPRI (supervised mode, execution time
$\approx 5$ seconds) on this dataset, with confidence 
assessed via $1000$ non-parametric bootstrap iterations. The  model highlights several non trivial selectivity relations involving genomic events relevant to \aCML{} development.

First, CAPRI predicts a progression involving  mutations in \setbp{},
\asxl{}  and \cbl{}, consistently with the recent study by \cite{meggendorfer2013setbp1}, in which these genes were shown to be highly correlated  and possibly functioning in a synergistic manner for \aCML{} progression. 
Specifically, CAPRI predicts  a selective advantage relation between
missense point mutations in \setbp{} and nonsense point mutations in
\asxl{}. This is in line with recent evidence from
\cite{inoue2014setbp1} suggesting that \setbp{} mutations are enriched
among \asxl{}-mutated \emph{myelodysplastic syndrome} (MDS) patients,
and \emph{in-vivo} experiments point to a driver role of \setbp{} for
that leukemic progression. 
Interestingly,  our model seems also to suggest a different role of \asxl{}
\emph{missense} and \emph{nonsense} mutation types in the
progression, yet more extensive studies (e.g., prospective or systems biology explanation) are needed to corroborate this hypothesis. 


Among the hypotheses given as input to CAPRI, the algorithm seems to suggest that the exclusivity pattern among \asxl{} and \sfb{} mutations selects for \cbl{} missense point mutations. The role of the \asxl{}/\sfb{} exclusivity pattern is consistent with the study of \cite{lin2014sf3b1} which shows that, on a cohort of 479 MDS patients, mutations in \sfb are inversely related to \asxl{} mutations. 

Also, in \cite{abdel2012asxl1} it was recently shown that \asxl{} mutations, in patients with MDS, \emph{myeloproliferative neoplasms} (MPN) and \emph{acute myeloid leukemia}, most commonly occur as  nonsense and insertion/deletion  in a clustered region adjacent to the highly conserved \emph{PHD} domain (see \cite{gelsi2009mutations}) and that mutations of any type eventually result in a loss of  \asxl{} expression. This observation is consistent with the exclusivity pattern among \asxl{} mutations in the reconstructed model, possibly suggesting alternative trajectories of somatic evolution for  \aCML{} (involving either \asxl{} nonsense or indel mutations).

Finally, CAPRI predicts  selective advantage relations among \tet{}
and \ezh{} missense point and indel mutations. Even though the limited sample size does not allow to draw definitive conclusions on the
ordering of such alterations, we can hypothesize that they may play a synergistic role in \aCML{} progression. Indeed, \cite{muto2013concurrent}  suggests that the concurrent loss of \ezh{} and \tet{} might cooperate in the pathogenesis of myelodysplastic disorders, by accelerating the overall tumor development, with respect to both MDSs and \emph{overlap disorders} (MDS/MPN). 

}





\section{Conclusions}
\label{sect:conclusions}

The \emph{reconstruction of cancer progression models} is a pressing problem,
as it promises to highlight important clues about the evolutionary dynamics of
tumors and to help in better targeting therapy to the
tumor (see e.g.,~\cite{olde2014cancer}).
 In the absence of large longitudinal datasets, progression extraction
algorithms rely primarily on \emph{cross-sectional} input data, 
thus complicating the statistical inference problem. 

In this paper we presented CAPRI, a new algorithm (and part of the TRONCO
package) that attacks the progression model reconstruction problem by
inferring \emph{selectivity relationships} among
``genetic events'' and organizing them in a graphical model.  The
reconstruction algorithm draws its power from a combination of a
scoring function (using Suppes' conditions) and subsequent filtering and refining procedures, maximum-likelihood estimates
and bootstrap iterations.  We have shown that CAPRI outperforms a wide
variety of state-of-the-art algorithms. 
We note that CAPRI performs especially well in
the presence of noise in the data, and with limited sample size.
Moreover we note that, unlike other approaches, CAPRI can
reconstruct different types of confluent trajectories unaffected by the
irregularities in the data -- the only limitation being our ability to
hypothesize these patterns in advance.  We also note that CAPRI's overall
algorithmic complexity and convergence properties 
do offer several tradeoffs to
the user.

\Newrev{
Successful cancer progression extraction
is complicated by tumor heterogeneity: many tumor types have molecular
subtypes following different progression patterns.
For this reason, it can be advantageous to cluster patient samples
by their genetic subtype prior to applying CAPRI. Several tools have
been developed that address this clustering problem (e.g.,
Network-based stratification \cite{hofree2013network} or COMET from \cite{comet}). 
A related problem is the classification of
mutations into functional categories. 
In this paper, we have used genes with deleterious mutations
as driving events. However, depending on other criteria, such as the level of
homogeneity of the sample, the states of the progression can represent
any set of discrete states at varying levels of abstraction. 
Examples include high-level hallmarks of cancer
proposed by \cite{hanahan_hallmarks_2000, Hanahan_Weinberg_2011},  a set of
affected pathways, a selection of driving genes,  or a set of specific
genomic aberrations such as genetic mutations at a more mechanistic
level. 
}

We are currently using CAPRI to conduct a number of studies on
publicly available datasets (mostly from TCGA, \cite{tcga}) in
collaboration with colleagues from various institutions.
\Newrev{In this work we have shown the results of the reconstruction on the \aCML{} dataset published by \cite{piazza_nat_gen}, and in SI Section 4 we include a further example application on ovarian cancer (\cite{knutsen2005interactive}), as well as a comparative study against the competing techniques. 
Furthermore, we are currently extending our pipeline in order to include 
pre-processing functionalities, such as patient
clustering and categorization of mutations/genes into pathways 
(using databases such as the KEGG database (see \cite{kanehisa2000kegg}) and
functionalities from tools like Network-based clustering, due to \cite{hofree2013network}. 

}

Encouraged by CAPRI's ability to infer interesting relationships in a
complex disease such as aCML, we expect
\Newrev{that in the future} CAPRI will
help uncover relationships to aid our understanding of cancer and
eventually improve targeted therapy design.

\section*{Acknowledgements}

This research was funded by the NSF
grants CCF-0836649 and CCF-0926166 and by Regione Lombardia (Italy)
under the research projects RetroNet through the ASTIL Program
[12-4-5148000-40]; U.A 053 and Network Enabled Drug Design project
[ID14546A Rif SAL-7] Fondo Accordi Istituzionali 2009.

We also thank Francesca Ciccarelli, King's College
London, UK, and others for suggesting the ``selectivity advantage''
terminology. 
We would also like to thank all the participants of the Workshop and
School on \emph{Cancer, Systems and Complexity} held on Lake Como,
Italy for many fruitful discussions there 
(\texttt{csac.lakecomoschool.org}).
Finally, we are also indebted to Rocco Piazza, Universit\`{a} degli Studi di
Milano Bicocca, Italy, for all the data, insights and
patience in explaining to us the biology of aCML.

\bibliographystyle{ieeetr}
\bibliography{references}

\appendix

\newpage

\section{Models inferred with CAPRI} \label{sec:algorithms}

We define a {\em progression DAG} as a directed acyclic graph  ${\cal D}=(N, \pi)$, where $N\subseteq {\cal U}$ is the set of nodes (e.g., selected from a universe ${\cal U}$ of {\em mutations}  or propositional formulas) and $\pi:N\to \wp(N)$ is a function, which associates with each node $j$ its {\em parents} $\pi(j) \subseteq N$. 
We wish to study the cases where such a DAG can be seen as a “model” for the following classes of \emph{selectivity patterns}, expressed in conjunctive normal form (CNF). The symbol $\causes$ stands for the \emph{selectivity relation}. 

\begin{definition}[DAG patterns] \label{def:DAG-claims} A  ${\cal D}=(N, \pi)$ is a model for models the {\em patterns}  
\[
\bigcup_{j\in N} \Big\{(\cc_1 \wedge \ldots \wedge \cc_n) \causes j  \mid  \pi(j)=\{\cc_1, \ldots, \cc_n\} \Big\}\, ,
\] where  $\cc_1 \wedge \ldots \wedge \cc_n$ is a CNF formula (each clause $\cc_j$ is one of two kinds: either an atomic event or a disjunction of events).
 \end{definition} 

Each DAG induces a {\em distribution} of observing a subset of events in a set of samples (i.e., a probability of observing a certain  {\em mutational profile} in the context of our application).
\begin{definition}[DAG-induced distribution] \label{def:treedistrib} Let ${\cal D}=(N, \pi)$ be a DAG and $\alpha:N\to[0,1]$ a labeling function, $\cal D$ generates a distribution where the probability  of observing  $N^\ast \subseteq N$ events is
\begin{equation}
\Probab{N^\ast} = \prod_{x \in N^\ast} \alpha(x) \cdot 
\prod_{y \in N \setminus N^\ast}
 \Big[1-\alpha(y)\Big]
\end{equation}
whenever $x \in N^\ast$, $\pi(x) \subset N^\ast$, and $0$ otherwise. 
\end{definition}

Notice that this definition, as expected, is equivalent to the one used in~\cite{beerenwinkel_2007} and retains a  tree-induced distribution such as those used in~\cite{Loohuis:2014im,desper_1999,szabo}. Further, notice that a sample which contains an event but not all  of its parents has a zero probability, thus subsuming the conjunctive interpretation of DAGs, as the result of compositional reasoning to infer co-occurrence patterns. These kinds of samples, which represent ``irregularities'' with respect to $\cal D$, might be generated when adding false positives/negatives to the sampling strategy.

\section{Theorems}

The statements and proofs of the theorems mentioned in the main text follow.

\subsection{Complexity}

Let $\mathcal{U}$ denote the \emph{universe} of all possible patterns
over a set $G$ of $n$ events, as before. Since $|\mathcal{U}|$ is exponential in $|G|$, then the following theorem
holds.
\begin{myth}[Asymptotic complexity]
  \label{th:complexity}
  Let $|G|=n$ and $D \in \{0,1\}^{m\times n}$
  where $m\gg n$, and let $N$ be the  nodes in the DAG returned by
  \ALGNAME{},  the \emph{worst case} time and space complexity (ignoring the cost of bootstrap) of
  building a selectivity topology is:
  \begin{itemize}
  \item $\Theta(mn)$ time and $\Theta(n^2)$ space, if
      $\Phi=\emptyset$;
    \item ${\Theta}(|\Phi|mn)$ time and ${\Theta}(|\Phi|m)$ space, if
      $\Phi \subset \mathcal{U}$ and $|N| \ll m$ (i.e., there are
      sufficiently many samples to characterize the input hypotheses);

    \item $\mathcal{O}(2^{2^n})$ time and space, if $\Phi= \mathcal{U}$.
    \end{itemize}
Thus, the overall complexity of \ALGNAME{} is one of the above, as suitable in each case, plus
the complexity of likelihood fit with regularization.
\end{myth}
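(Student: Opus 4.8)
The plan is to read off each bound directly from the pseudocode of \ALGNAME{} (Algorithm~\ref{alg:inference}), charging time and working space to each step in turn and then specializing the totals to the three regimes. Since the statement explicitly excludes bootstrap and isolates the likelihood-fit step, I would only need to account for the \emph{lifting}, \emph{node construction}, \emph{coefficient/edge}, and \emph{labeling} steps, and then append ``plus the complexity of likelihood fit'' as the theorem does. Because the claims are stated with $\Theta$ rather than merely $\mathcal{O}$, I would prove matching upper and lower bounds: the upper bounds from the per-step accounting, and the lower bounds from the observation that the algorithm must at least scan the portion of $D(\Phi)$ it inspects and evaluate every hypothesis in $\Phi$.

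First I would treat the two steps that touch the data matrix. Lifting appends one column per hypothesis $\varphi_i \causes e_i \in \Phi$; evaluating a CNF formula of size $O(n)$ on each of the $m$ rows costs $O(|\Phi|\,m\,n)$ time and $O(|\Phi|\,m)$ additional space for the new columns. For the coefficients, I would precompute the marginal frequency of every node in one pass and tabulate the pairwise joint frequencies, so that each entry $\Gamma_{i,j}=\Probab{i}-\Probab{j}$ and $\Lambda_{i,j}=\Pcond{j}{i}-\Pcond{j}{\~i}$ is then recovered in $O(1)$; the node set $N=G\cup\left(\bigcup_{\varphi_i}\conj{\varphi_i}\right)$ has size $O\!\left(n+\sum_i|\conj{\varphi_i}|\right)$, and forming $\pi$ scans the $O(|N|^2)$ pairs. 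Storing $\Gamma$ and $\Lambda$ (equivalently the adjacency of $\mathcal{D}$) costs $\Theta(|N|^2)$ space, while the labeling step is linear in $|N|$.

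I would then specialize. With $\Phi=\emptyset$ there is no lifting and $|N|=n$, so the cost is dominated by the single scan of $D$, giving $\Theta(mn)$ time and $\Theta(n^2)$ working space, the matching lower bound being that every entry of $D$ must be read. With $\Phi\subset\mathcal{U}$ and $|N|\ll m$, the lifting term $\Theta(|\Phi|\,m\,n)$ dominates the pairwise bookkeeping (which is controlled precisely because $|N|\ll m$), yielding $\Theta(|\Phi|\,m\,n)$ time and $\Theta(|\Phi|\,m)$ space, with the lower bound furnished by the need to evaluate all $|\Phi|$ formulas. For $\Phi=\mathcal{U}$ I would bound $|\mathcal{U}|$ combinatorially: a disjunctive clause over $n$ events is a subset of events, giving $2^{n}$ clauses, and a CNF formula is a set of such clauses, giving $|\mathcal{U}|=\Theta(2^{2^{n}})$; since lifting alone materializes one column per formula, both time and space are $\mathcal{O}(2^{2^{n}})$.

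The main obstacle is the coefficient step in the $\Phi=\emptyset$ regime: I must argue that computing \emph{all} pairwise quantities $\Lambda_{i,j}$ — which depend on the joint statistics $\Probab{i\wedge j}$ — does not exceed the claimed $\Theta(mn)$ time. The crux is to show that a bounded number of passes over $D$ suffices to tabulate the marginal and joint frequency counts, after which each of the $O(n^2)$ coefficients and edge tests is $O(1)$, so that under the hypothesis $m\gg n$ the $\Theta(mn)$ data pass dominates the $\Theta(n^2)$ bookkeeping; making this accounting precise (and handling the analogous domination argument in the $|N|\ll m$ regime) is the one place where the analysis is more than routine. Everything else reduces to summing the per-step costs established above.
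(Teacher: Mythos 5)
Your overall plan---per-step accounting of lifting, node/edge construction and labeling, followed by specialization to the three regimes---is exactly the route the paper's proof takes. Your lifting bounds ($\Theta(|\Phi|mn)$ time, $\Theta(|\Phi|m)$ space), your double-exponential count of $\mathcal{U}$ (clauses as subsets of $G$, CNF formulas as sets of clauses), and your domination arguments under $m\gg n$ and $|N|\ll m$ all coincide with the paper's; you even go slightly beyond it by insisting on matching lower bounds for the $\Theta$'s, which the paper only asserts.

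The one step that would fail as written is your resolution of the obstacle you yourself flag. You claim that a bounded number of passes over $D$ suffices to tabulate all pairwise joint frequency counts, after which each $\Lambda_{i,j}$ is $O(1)$. A pass over $D$ is $\Theta(mn)$ work, but the table of all $\binom{n}{2}$ joint counts is $D^{\top}D$: producing it requires, for each row, touching every pair of columns that are both $1$, which is $\Theta(mn^2)$ in the worst case (e.g., when a constant fraction of the entries equal $1$). No constant number of linear passes yields it, and since $m\gg n$ the term $mn^2$ strictly dominates $mn$, so your accounting would give $\Theta(mn^2)$ for the first bullet rather than the claimed $\Theta(mn)$. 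The paper does not attempt your route: it charges $\Theta(mn)$ only to the evaluation of the marginals $\Probab{\cdot}$ and then counts the $n(n-1)$ pairwise score evaluations (plus the $\log n$-cost membership tests for $\sqsubseteq$ and an $O(n\log n)$ loop-removal step, which you omit harmlessly) at unit cost each, for a separate $\Theta(n^2)$ term absorbed by $mn>n^2$. To make your argument go through you must either adopt that unit-cost convention explicitly (score comparisons assumed $O(1)$ once the relevant frequencies are available) or exhibit a genuinely $o(mn^2)$ method for the joint counts; the ``bounded number of passes'' claim, as stated, is not one.
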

\begin{proof} Recall that $k=|\Phi|$, $n=|G|$ and $D \in \{0,1\}^{m\times n}$, thus $D(\Phi)$ has $K=(n+k) m$ entries. We now analyze the complexity of \ALGNAME{} step-by-step.

\begin{itemize}
\item The cost of lifting depends on the input set $\Phi$, if $\Phi=\emptyset$ it is ${\cal O}(1)$ both in time and space since $D(\emptyset)=D$. 

For non-empty sets, it is necessary to evaluate $k \cdot m$ entries, after each hypothesis $\varphi \causes e$ is evaluated. Given that every $\varphi$ has at worst $n$ events included, its evaluation cost is at most ${\cal O}(n)$, even if lazy evaluation is performed.
Thus, the cost of lifting is $\Theta(k \cdot m \cdot n)$, for a single bootstrap, which amplifies the bootstrap cost, as discussed in the previous section, and does so in a multiplicative fashion. In terms of space, if 
$\Phi\neq\emptyset$ the overhead is ${\Theta}(K)$  if one copies $D$ in $D(\Phi)$,  $\Theta(km)$ otherwise.

\item The cost of computing the parent function for the DAG requires a pair-wise calculation of the probabilistic scores, plus the cost of testing the $\sqsubseteq$ relation\footnote{Relation $\sqsubseteq$ represents the usual syntactical ordering relation among atomic events, e.g., $a$, $b$, and formulas, e.g., $a \sqsubseteq (a \vee b) \vee c \vee d$.}. Let $w=|N|$, where $N$ is the set of nodes in the DAG returned by \ALGNAME{}. 
The score matrices for temporal priority and probability raising are  $n\times w$, i.e., have columns for both atomic events and the disjunctive patterns in the formulas of $\Phi$, since \ALGNAME{} disregards patterns of the form $\varphi_i \causes \varphi_j$ and $a \causes \varphi$ (differently, it would have been $w \times w$). With the simplest membership test algorithm, checking whether an \at{} event is present in a patterns  is logarithmic in the size of the pattern, if we lexicographically order its atomic events, thus bounded from above by $\log n$.  Thus if  we perform lazy evaluation for $\sqsubseteq$  the total number of comparison to select the parent function is at most
\[
n[(n-1) + (w-n) \log n],
\] 
yielding a  $\Theta(n^2)$ cost in time and space, if $w-n$ is small (it is $0$ if $\Phi=\emptyset$), ${\cal O}(n(w-n)\log n)$ otherwise.  In terms of space,  the complexity is $\Theta(n[(n-1) + (w-n)])$, for a general $\Phi$.


\item As explained in \ALGNAME{}'s definition, sometimes, albeit extremely rarely, a few extra operations might have to be performed when degenerate scores and loops are present. The procedure we suggested in \ALGNAME{}'s definition requires sorting plus scan, thus its worst-case  time complexity is ${\cal O}(n\log n)$. Clearly, as this term is omitted in the worst-case complexity analysis of the steps discussed above, this unlikely scenario does not alter the complexity of the algorithm.

\item Note that the cost of this analysis does not include the cost of BIC/likelihood - or any regularization strategy one might adopt, as spelled out in the theorem statement\footnote{Since in the current version of CAPRI, the likelihood fit is computed by a \emph{hill climbing} heuristic algorithm, the overall cost of CAPRI is still polynomial.}.
\end{itemize}

The overall complexity follows, since:
\begin{itemize}
\item $\Phi=\emptyset$ then the major cost is that of evaluating $\Probab{\cdot}$ since usually $m\gg n$, thus $mn>n^2$. With regard to space, the only cost is that of book-keeping the scores.

\item Let $m \gg n$ and $w-n>k$, in this case since $km\gg n$ and, under the mild assumption that $m>w$ and that $k$ and $\log n$ are not relevant (in size) for $m$ and $w$, then  $km \gg (w-n)\log n$ which is  the  cost  of lifting; thus  is ${\Theta}(kmn)$ in time. Similarly, it follows that $mk\gg n[(n-1)+(w-n)]$.

\item By  computations similar to those carried out, it is indeed possible to see that  $\cal U$, which is clearly finite since $G$ is, grows {\em double-exponentially} in size with $|G|$  (i.e. the number of $n$-ary boolean functions, defined over the \at{} events  in any pattern, possibly with negated literals). Thus the bound follows. 
\end{itemize}
\end{proof}

\subsection{Correctness and expressivity}

Let $\mathcal{W} \subseteq \mathcal{U}$ be the set of true patterns,
which we seek to infer. Here, we investigate the relation between
$\mathcal{W}$ and the patterns retrieved by CAPRI, as  a function of
sample size $m$ and error present as false positives/negatives, which
are assumed to occur at rates $\epsilon_+$ and $\epsilon_-$.

Hereafter, $\Sigma$ denotes the set of patterns, implicit in the DAG
returned by our algorithm for an input set $\Phi$ and a matrix $D$; we
write this fact as $D(\Phi)\models \Sigma$.  We prove the following 
theorems\footnote{These results assume a BIC regularisation but
hold for any convergent regularization score.}.

\begin{myth}[Soundness and completeness]
  \label{th:soundness}
  Let the sample size $m \to \infty$ and the data be uniformly randomly
  corrupted by false positives and negatives rates
  $\epsilon_-=\epsilon_+\in[0,1)$. If the given input is a superset
  of the true patterns, then \ALGNAME{} reconstructs exactly the  true
  patterns in $\mathcal{W}$, that is, $\mathcal{W} \subset \Phi$ $\Rightarrow$
  $D(\Phi)\models \mathcal{W} \cap \Phi$.
\end{myth}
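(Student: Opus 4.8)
The plan is to combine a law-of-large-numbers argument with two structural facts: that the prima facie topology built in steps~3--5 is invariant under uniform noise and contains every true relation, and that the BIC fit in step~6 deletes exactly the spurious edges. Throughout I would work in the population limit. As $m\to\infty$, the strong law of large numbers forces every empirical marginal and conditional estimated by \ALGNAME{} to converge almost surely to its value under the noisy data-generating distribution, so the bootstrap resampling concentrates and each Mann--Whitney test resolves a strict inequality $\Gamma_{\varphi,j}>0$ or $\Lambda_{\varphi,j}>0$ with the correct sign and a vanishing $p$-value. Since $\mathcal{W}\subset\Phi$ gives $\mathcal{W}\cap\Phi=\mathcal{W}$, it then suffices to prove (\emph{completeness}) that the reconstructed prima facie DAG contains every edge of $\mathcal{W}$, and (\emph{soundness}) that step~6 removes every remaining edge not in $\mathcal{W}$ and no edge of $\mathcal{W}$.

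For completeness I would first establish a \emph{noise-invariance lemma} for the relation $\causes$. Writing $c=1-\epsilon_+-\epsilon_-$, uniform per-entry corruption sends the true marginal $\Probab{i}$ to the observed $c\,\Probab{i}+\epsilon_+$, a single affine map common to all events, which is strictly increasing since $c>0$ in the noise regime of interest; hence temporal priority $\Probab{i}>\Probab{j}$ is preserved. For probability raising I would use its equivalence with positive dependence, $\Pcond{j}{i}>\Pcond{j}{\~i}\iff\mathrm{Cov}(i,j)>0$, together with the identity $\widetilde{\mathrm{Cov}}(i,j)=c^{2}\,\mathrm{Cov}(i,j)$, which holds because the corruptions of columns $i$ and $j$ are independent given the clean bits; since $c^{2}>0$ the sign of $\Lambda_{i,j}$ is preserved. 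Thus the prima facie topology computed on corrupted data coincides with the one computed on clean data. Because $\mathcal{W}\subset\Phi$, every true formula and each of its disjunctive clauses $\conj{\varphi}$ appears as a column of $D(\Phi)$ and hence as a node of $N$; so each $\varphi\causes j\in\mathcal{W}$ satisfies $\Gamma_{\varphi,j},\Lambda_{\varphi,j}>0$ and is included as an edge, giving $\mathcal{W}\subseteq\Sigma$ for the prima facie set $\Sigma$.

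For soundness I would treat step~6 as BIC-consistent model selection over subgraphs of the prima facie DAG, scored by the graphical-model log-likelihood $\mathcal{LL}(\mathcal{D},D(\Phi))$ associated with the labeling $\alpha$ (the noisy-conjunctive reading of Definition~\ref{def:treedistrib}). Spurious edges split into two kinds. A \emph{transitive} edge ($i\causes w$ arising from $i\causes j\causes w$) imposes a parent constraint already entailed by $\mathcal{W}$, and by the Markov property $\Pcond{w}{i\wedge j}=\Pcond{w}{j}$ it is likelihood-neutral while strictly increasing $\mathrm{dim}(\mathcal{D})$; the $\tfrac{\log m}{2}\,\mathrm{dim}$ penalty then removes it. A \emph{lateral} edge (e.g.\ between two co-occurring siblings of a common cause) constrains the model away from profiles the true distribution realises with positive probability, producing a strictly positive expected Kullback--Leibler divergence and hence a $\Theta(m)$ log-likelihood deficit, so the fit itself removes it. Conversely, deleting any edge of $\mathcal{W}$ yields an induced distribution different from the generating one and likewise opens a $\Theta(m)$ deficit that dominates the $\mathcal{O}(\log m)$ penalty gap; hence all true edges survive. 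Combining, $\Sigma=\mathcal{W}$ as $m\to\infty$, which is exactly $D(\Phi)\models\mathcal{W}\cap\Phi$.

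The step I expect to be hardest is soundness: showing that $\mathcal{W}$ is the \emph{unique} BIC maximiser within the constrained search space of prima facie subgraphs under the conjunctive distribution of Definition~\ref{def:treedistrib}. Beyond the two representative families above one needs a general identifiability argument -- that no admissible subgraph attains the fit of $\mathcal{W}$ with equal-or-fewer parameters -- together with a precise accounting of how $\mathrm{dim}(\mathcal{D})$ charges each parent, so that the $\tfrac{\log m}{2}$ penalty is decisive exactly on the likelihood-neutral edges and never overrides a $\Theta(m)$ likelihood gap. A secondary technical point is that the noise-invariance identities hold verbatim only for atomic events; for the nonlinear disjunctive-clause columns $\conj{\varphi}$ of the lifted matrix one must re-derive sign-preservation of $\Gamma$ and $\Lambda$ directly at the level of the lifted events, since $\mathrm{Cov}$ no longer scales by a clean factor through a disjunction.
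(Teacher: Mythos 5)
Your proposal is correct and follows the same skeleton as the paper's proof --- (a) the prima facie DAG of steps 3--5 contains every true pattern plus possibly spurious ones, (b) the BIC fit of step 6 keeps exactly $\mathcal{W}$, (c) uniform noise does not disturb (a) or (b) --- but you execute both key steps by genuinely different arguments. For the noise step the paper disposes of $\epsilon_\pm>0$ in one sentence (``the marginal and joint probabilities change monotonically''), whereas you make this precise via the affine map $\Probab{i}\mapsto c\,\Probab{i}+\epsilon_+$ and the covariance scaling by $c^2$ with $c=1-\epsilon_+-\epsilon_-$; this buys an explicit sign-preservation lemma, exposes the real boundary of validity (the theorem as stated allows $\epsilon_+=\epsilon_-\ge 1/2$, where $c\le 0$ and the claim fails --- the intended regime is $\epsilon_++\epsilon_-<1$, as in the paper's $\nu/2$ parametrization), and correctly isolates the residual issue that the scaling identities do not pass verbatim through the nonlinear lifted columns $\conj{\varphi}$. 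For the soundness step the paper argues by comparing joint probabilities ($\Pconj{\varphi}{e}=\Probab{e}$ for true patterns versus $\Pconj{\varphi^\ast}{e}<\Probab{e}$ for spurious ones), while you run a standard BIC-consistency rate argument ($\Theta(m)$ likelihood deficit versus $\mathcal{O}(\log m)$ penalty) split by edge type; your split is actually the more careful one, since for a \emph{transitive} spurious ancestor $\varphi^\ast$ of $e$ one has $\Pconj{\varphi^\ast}{e}=\Probab{e}$ under the persistence semantics, so the paper's strict inequality does not hold there and only the dimension penalty, not the likelihood, removes that edge --- exactly the case you single out. Your closing caveat, that uniqueness of $\mathcal{W}$ as the BIC maximizer over all admissible subgraphs still needs a general identifiability argument, identifies the one point that neither your sketch nor the paper's proof fully discharges; the paper instead leans on the observation that a sample containing $e$ without all of $\pi(e)$ receives probability zero under the induced distribution of its Definition~2, which is the degenerate form of your KL-divergence argument and is worth stating explicitly if you flesh this out.
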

\begin{proof} 
We first prove the case with $\epsilon_+=\epsilon_-=0$, that is, the case where data have no noise. Some notations, used below: $(i)$ we denote with $\varphi \causes e$ true patterns (i.e. in $\cal W$), and $(ii)$ with $\varphi^\ast \causes e$ false ones. We divide the proof into several steps:
\begin{itemize}
\item First, we show that a selectivity DAG contains all the true patterns, which is
\[
\forall_{\varphi \causes e \in {\cal W}} \;  \pi(e) = \{\varphi\}\, .
\]
By the event-persistence property usually valid for cancer genomes (fixating mutations are present in the progeny of a clone) the occurring times satisfy $t_\varphi < t_e$ which, in a frequentist sense, implies $\Probab{\varphi} > \Probab{e}$. In addition, 
it holds by construction  that $\Pconj{\varphi}{e} = \Probab{e}$ when $\epsilon_+=\epsilon_-=0$, thus $\Pcond{e}{\varphi}= \Probab{e}/\Probab{\varphi}$, which is strictly positive since $\Probab{\varphi}$ and $\Probab{e}$ are, and that $\Pconj{\~\varphi}{e}=0$ , thus $\Pcond{e}{\~\varphi}=0$. Notice that $e \not \sqsubseteq \varphi$ by hypothesis.

\item Now, we show that it might contain also spurious patterns, which is
\[
\exists_{\varphi^\ast \causes e \not\in {\cal W}} \; \pi(e) \subseteq \conj{\varphi^\ast} \cup \{ \varphi^\ast\}\, .
\]
These  $\varphi^\ast \causes e$ are of two types: sub-formulas spurious or topologically spurious (which include transitivities, as we may recall). For the former case note that
\[
\forall_{\varphi \causes e \in {\cal W}} \; \forall_{\hat{\varphi}^\ast \in \conj{\varphi}}\; \hat{\varphi}^\ast \causes e \not \in \cal W,
\]
but satisfies both temporal priority and probability raising. Also, consider any other ${\hat{\varphi}^\ast}_{\star} \sqsubseteq \hat{\varphi}^\ast$  and note that even this might satisfy both temporal priority and probability raising.   For the latter case, it might be that there exists some other $\varphi^\ast$ such that, it is positively statistically correlated to a real pattern, and that might satisfy Suppe's conditions as well.
\end{itemize}
Thus, for any $e \in G$ such that $\varphi \causes e \in \cal W$
\[
\pi(e) =  \{ \varphi\}   \cup {\cal S},
\]
where $\cal S$ is a set of spurious patterns. We now examine the relation holding between the selectivity DAG and its modification performed via BIC. The derivations shown in the following hold regardless of the type of regularization which enjoys convergency.

We denote these DAGs as ${\cal D}_{\text{pf}}$ and ${\cal D}_{\text{BIC}}$.
\begin{itemize}
\item[$(i)$] First, we show that all true patterns in ${\cal D}_{\text{pf}}$ are in ${\cal D}_{\text{BIC}}$, i.e.
\[
\forall_{\varphi \causes e \in {\cal W}} \;  \pi_{\text{BIC}}(e) = \{ \varphi\} \, .
\]
Note that, although in general $\Pconj{a}{b} \leq \min \{ \Probab{a}, \Probab{b}\}$, for the true patterns the following holds: $\Pconj{\varphi}{e} = \Probab{e}$, when $\epsilon_+=\epsilon_-=0$; it is the maximum value for this joint probability, thus ensuring the maximum-likelihood fit. Thus the pattern   is maintained in ${\cal D}_{\text{BIC}}$.

\item[$(ii)$] Second, we need to show that if $\forall \varphi^\ast \causes  e \not \in {\cal W}$ but present in ${\cal D}_{\text{pf}}$, there exists a pattern $\varphi \causes  e \in {\cal W}$, which is present in ${\cal D}_{\text{pf}}$ and in ${\cal D}_{\text{BIC}}$ and any 
$\varphi^\ast \causes e$ is not in ${\cal D}_{\text{BIC}}$. 

Note that $\Pconj{\varphi}{e} = \Probab{e}$, as above. Instead,  $\Pconj{\varphi^\ast}{e} < \Probab{e}$ since it is spurious, hence 
$\Pconj{\varphi \wedge \varphi^\ast}{e} < \Pconj{\varphi}{e}$, thus the likelihood fit of $\varphi \causes  e$ is maximal with respect to any of the patterns $\varphi^\ast \causes  e$.
\end{itemize}
To extend the proof to $\epsilon_+=\epsilon_-\in[0,1)$ with uniform noise, it suffices to note that the marginal and joint probabilities change monotonically as a consequence of the assumption that the noise is uniform. Thus, all inequalities used in the preceding proof still hold, which concludes the proof.
\end{proof}

Notice that if it could be assumed that  $\Phi$ characterizes
$\mathcal{W}$ well, then all true patterns would be in $\Phi$, and the
corollaries below follows immediately.
\begin{myco}[Exhaustivity]
  Assuming the same hypothesis as the theorem agove,  $D(\mathcal{U})\models
  \mathcal{W}$. \flushleft{$\Box$}
\end{myco}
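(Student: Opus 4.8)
The plan is to obtain this corollary as an immediate specialization of Theorem~\ref{th:soundness}, instantiated with $\Phi = \mathcal{U}$. Nothing in the statement of the soundness-and-completeness theorem constrains the hypothesis set $\Phi$ beyond requiring that it contain the true patterns, so the entire content of the corollary lies in checking that this substitution is legitimate and that the resulting conclusion collapses to the asserted form.

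First I would observe that, by definition, $\mathcal{U}$ is the universe of \emph{all} patterns over the event set $G$, and that $\mathcal{W} \subseteq \mathcal{U}$ by the very way $\mathcal{W}$ was introduced as the set of \emph{true} patterns. Hence the premise $\mathcal{W} \subset \Phi$ of Theorem~\ref{th:soundness} is satisfied when we take $\Phi = \mathcal{U}$ (indeed $\mathcal{W} \subsetneq \mathcal{U}$ generically, since the universe also contains the false patterns). I would also recall, invoking the finiteness argument already established in Theorem~\ref{th:complexity}, that $\mathcal{U}$ is a finite set (albeit double-exponentially large in $|G|$), so that $\Phi = \mathcal{U}$ is an admissible input to \ALGNAME{} and the theorem applies verbatim. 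The corollary is a statement about what is recovered \emph{in principle}, and is therefore indifferent to the prohibitive cost of this maximal choice of $\Phi$.

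Second I would apply Theorem~\ref{th:soundness} directly with this choice, which yields $D(\mathcal{U}) \models \mathcal{W} \cap \mathcal{U}$. Finally, since $\mathcal{W} \subseteq \mathcal{U}$ the intersection simplifies, $\mathcal{W} \cap \mathcal{U} = \mathcal{W}$, giving $D(\mathcal{U}) \models \mathcal{W}$, as required. I expect no genuine obstacle beyond this bookkeeping: exhaustivity is exactly the instance of the completeness theorem in which the prior hypothesis set is taken to be maximal, so that every true pattern is automatically among the inputs and the conditional ``$\mathcal{W} \subset \Phi$'' becomes vacuously guaranteed. The only point requiring a word of care is confirming the admissibility of $\Phi = \mathcal{U}$, which the finiteness of $\mathcal{U}$ supplies.
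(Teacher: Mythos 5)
Your proposal is correct and matches the paper's own treatment: the paper gives no explicit proof, remarking only that once all true patterns are contained in $\Phi$ the corollary ``follows immediately'' from the soundness-and-completeness theorem, which is precisely your instantiation $\Phi = \mathcal{U}$ together with $\mathcal{W} \cap \mathcal{U} = \mathcal{W}$. Your extra remarks on the finiteness of $\mathcal{U}$ and the in-principle (cost-indifferent) nature of the statement are consistent with the paper's own discussion following the corollaries.
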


\begin{myco}[Least Fixed Point]
  $\mathcal{W}$ is the \emph{lfp} of the monotonic transformation
  \[
  \bigsqcup_{\Phi} D(\Phi)
  \equiv D\Big( \bigsqcup_{\Phi} \Phi\Big) \models \mathcal{W}.\; \qquad\Box .
  \]
\end{myco}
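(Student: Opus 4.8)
The plan is to read the corollary through the lens of Knaster--Tarski/Kleene fixed-point theory on the complete lattice $(\wp(\mathcal{U}), \subseteq)$, whose bottom is $\emptyset$, whose top is $\mathcal{U}$, and whose join $\bigsqcup$ is set union. I would introduce the inference operator $F\colon \wp(\mathcal{U}) \to \wp(\mathcal{U})$ defined by $F(\Phi) = \{\sigma \mid D(\Phi) \models \sigma\}$, i.e. $F(\Phi)$ collects all the selectivity patterns implicit in the DAG that \ALGNAME{} returns on the lifted matrix $D(\Phi)$. With this notation the displayed identity $\bigsqcup_{\Phi} D(\Phi) \equiv D(\bigsqcup_{\Phi} \Phi)$ becomes the statement that $F$ (equivalently, lifting followed by inference) commutes with joins, and ``$\models \mathcal{W}$'' becomes $F(\bigsqcup_{\Phi} \Phi) = \mathcal{W}$, which since $\bigsqcup_{\Phi} \Phi = \mathcal{U}$ is precisely the Exhaustivity corollary.

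First I would establish that $F$ is monotone: if $\Phi_1 \subseteq \Phi_2$ then $F(\Phi_1) \subseteq F(\Phi_2)$. This follows from the mechanics of lifting --- enlarging $\Phi$ only appends columns to $D(\Phi)$ and only enlarges the set of candidate clauses among which $\pi(\cdot)$ is chosen --- together with the likelihood-maximality argument in the proof of Theorem~\ref{th:soundness}, which guarantees that any pattern surviving the pairwise $\causes$ test and the BIC filter under $\Phi_1$ still survives under $\Phi_2$ as $m \to \infty$. Next I would verify the join-homomorphism asserted by ``$\equiv$'': because lifting evaluates each hypothesis row-by-row and independently of the others, and because the tests on $\Gamma_{i,j}$ and $\Lambda_{i,j}$ that determine $\pi(\cdot)$ are local (pairwise), the columns of $D(\bigcup_i \Phi_i)$ are the union of the columns of the $D(\Phi_i)$ and the inferred pattern set decomposes accordingly, giving $F(\bigcup_i \Phi_i) = \bigcup_i F(\Phi_i)$; in particular $F$ is continuous.

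With monotonicity and continuity in hand, I would invoke Knaster--Tarski for existence of a least fixed point and Kleene's theorem to describe it as $\bigsqcup_n F^n(\emptyset)$. To identify this least fixed point with $\mathcal{W}$ I would use two facts from the preceding results: $\mathcal{W}$ is a fixed point, since instantiating Theorem~\ref{th:soundness} at $\Phi = \mathcal{W}$ (reading $\subset$ as $\subseteq$) gives $D(\mathcal{W}) \models \mathcal{W} \cap \mathcal{W} = \mathcal{W}$, i.e. $F(\mathcal{W}) = \mathcal{W}$; and evaluating $F$ at the top yields $F(\mathcal{U}) = \mathcal{W}$ by Exhaustivity, so $F$ is constantly $\mathcal{W}$ on the whole cone $[\mathcal{W}, \mathcal{U}]$. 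Minimality then follows because soundness forces $F(\Phi) \subseteq \mathcal{W}$ for every $\Phi$ (as $m \to \infty$ only true patterns survive), so no fixed point can lie strictly above $\mathcal{W}$ and the monotone iteration from $\emptyset$ cannot overshoot it.

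The hard part will be pinning down that $\mathcal{W}$ is genuinely the \emph{least} fixed point rather than merely \emph{a} fixed point. The delicate point is that a conjunctive-only hypothesis set can be self-reproducing --- $F(\emptyset)$ already returns all singleton and co-occurrence true patterns, and feeding these back need not manufacture the disjunctive members of $\mathcal{W}$ --- so a priori the Kleene chain could stabilise strictly below $\mathcal{W}$. Resolving this requires either restricting the lattice to hypothesis sets that are clause-closed with respect to $\mathcal{W}$ (so that the disjunctive clauses $\conj{\varphi}$ needed to express each true pattern $\varphi \causes e$ are available whenever the pattern is present), or reading ``least fixed point'' as least \emph{within the cone of inputs that characterise} $\mathcal{W}$ --- exactly the regime in which Theorem~\ref{th:soundness} was proved and under which the collapse of $F$ to the constant $\mathcal{W}$ on $[\mathcal{W}, \mathcal{U}]$ holds. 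I would make this scope explicit, after which the commuting-with-joins identity together with Exhaustivity closes the argument.
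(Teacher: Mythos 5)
The paper offers no proof of this corollary at all: it carries a terminal $\Box$ and is meant to follow ``immediately'' from Theorem~2 once one grants that $\bigsqcup_\Phi \Phi = \mathcal{U} \supseteq \mathcal{W}$, so that $D(\mathcal{U}) \models \mathcal{W}$ is just the Exhaustivity corollary restated. Your Knaster--Tarski/Kleene formalization is therefore a genuinely different and more ambitious route, and it is valuable precisely because it exposes what the one-line reading glosses over. But the central step does not close. Your minimality argument is inverted: from ``$F(\Phi)\subseteq\mathcal{W}$ for every $\Phi$'' together with $F(\mathcal{W})=\mathcal{W}$ one concludes that every fixed point $X=F(X)$ is contained in $\mathcal{W}$, i.e.\ that $\mathcal{W}$ is the \emph{greatest} fixed point, not the least; and, as you yourself observe, the Kleene chain $\bigsqcup_n F^n(\emptyset)$ can stall at the conjunctive fragment strictly below $\mathcal{W}$, since $F(\emptyset)\subseteq\mathcal{C}$ by Theorem~3 and iteration never manufactures a disjunctive clause that was not supplied as input. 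You flag this and propose to restrict the lattice or reinterpret ``least,'' but that amounts to amending the statement rather than proving it; the gap remains open in your write-up.

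A second premise is also unwarranted. ``$F(\Phi)\subseteq\mathcal{W}$ for every $\Phi$'' is not what the paper's theorems give: Theorem~2 yields $D(\Phi)\models\mathcal{W}\cap\Phi$ only under the hypothesis $\mathcal{W}\subset\Phi$, and Theorem~3 explicitly allows the set $\Sigma$ inferred from $D(\emptyset)$ to contain patterns that are spurious for general CNF formulas (screened off by some $\beta\causes e\in\mathcal{W}\setminus\mathcal{C}$ the algorithm cannot see). So on the lower part of the lattice $F$ is not sound with respect to $\mathcal{W}$, which undercuts both your monotonicity claim (an edge surviving the global BIC fit under $\Phi_1$ need not survive once the richer clause set of $\Phi_2$ enters the likelihood) and the assertion that the iteration ``cannot overshoot.'' The corollary, read literally, is only justified on the cone $[\mathcal{W},\mathcal{U}]$ where Theorem~2 applies --- which is exactly the reading the paper announces in the sentence preceding the corollaries --- and your argument becomes correct only once that restriction is made explicit from the outset rather than offered as a fallback.
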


Since a direct
  application of this theorem incurs a prohibitive computational cost,
  it only serves to idealize the ultimate power of the framework we
  have proposed. That is, the theorem only states  that  \ALGNAME{} is
  able to select only the true patterns asymptotically (in the sample size), regardless of how the putative hypotheses size
  $\mathcal{U}$ grows, e.g., in the worst-case exponentially. It also clarifies that
  the algorithm is able to ``filter out'' all the spurious patterns (true
  negatives), and produces the true positives  more and more reliably
  as a function of the computational and data resources.

Now we restrict our attention to co-occurrence types of patterns so as
to enable a fair comparison with~\cite{beerenwinkel_2007}. We denote
with  $\mathcal{C} \subset \mathcal{U}$ the set of all possible such
patterns, and we prove the following

\begin{myth}[Inference of co-occurrence patterns]
  \label{th:sound-cnj}
  Suppose  $\Phi=\emptyset$; as before, let the sample size $m \to
  \infty$ and let the data be uniformly corrupted by false positives and
  negatives rates $\epsilon_-=\epsilon_+\in[0,1)$. Then only
  co-occurrence patterns on atomic events are inferred, which are
  either true or spurious for general CNF formulas. That is:  if
  $D(\emptyset)\models \Sigma$ then $\Sigma \subseteq \mathcal{C}$.
  Furthermore,
  \begin{enumerate}
  \item $\Sigma \cap {\mathcal{W}}$ are true patterns and 
  \item For any other pattern $\alpha \causes e  \in (\Sigma \setminus
    \Sigma \cap \mathcal{W})$  there exist $\beta \causes e\in
    \mathcal{W} \setminus \mathcal{C}$ such that $\beta$ screens off
    $\alpha$ from $e$.
  \end{enumerate}
\end{myth}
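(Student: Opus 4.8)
The plan is to dispose of the containment $\Sigma\subseteq\mathcal{C}$ and of claim (1) essentially by unwinding the construction, and to concentrate the real work on the screening-off statement (2). First I would settle $\Sigma\subseteq\mathcal{C}$. Since $\Phi=\emptyset$, lifting is trivial, $D(\emptyset)=D$, so the node set is $N=G$ and the clause-branch of the parent function $\pi$ is empty; hence $\pi(j)$ consists solely of atomic events passing the pairwise test $\Gamma_{i,j},\Lambda_{i,j}>0$. By Definition~\ref{def:DAG-claims} every induced pattern is therefore a conjunction of atomic events, i.e.\ a co-occurrence pattern, giving $\Sigma\subseteq\mathcal{C}$. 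For claim (1) I would invoke compositional reasoning for co-occurrence patterns together with the $m\to\infty$, uniform-noise argument already used in Theorem~\ref{th:soundness}: a true co-occurrence pattern satisfies both Suppes' conditions pairwise and attains the maximal joint probability $\Pconj{\beta}{e}=\Probab{e}$, so it is retained by the BIC fit. Thus $\Sigma\cap\mathcal{W}=\mathcal{W}\cap\mathcal{C}$, all of whose members are true by definition of $\mathcal{W}$.

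For (2), fix $\alpha\causes e\in\Sigma\setminus(\Sigma\cap\mathcal{W})$; being in $\Sigma\subseteq\mathcal{C}$, $\alpha$ is a conjunction of atomic events that passed both prima-facie conditions and survived regularization, yet is not generative. The first key step is a dichotomy on the true cause of $e$. Suppose, for contradiction, that every true pattern $\beta\causes e\in\mathcal{W}$ had $\beta\in\mathcal{C}$. Then, exactly as in Theorem~\ref{th:soundness}, compositional reasoning reconstructs $\beta$ and the maximality $\Pconj{\beta}{e}=\Probab{e}$ forces BIC to prune every competing $\alpha\causes e$ with $\Pconj{\alpha}{e}<\Probab{e}$, contradicting $\alpha\causes e\in\Sigma$. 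Hence $e$ admits a true pattern $\beta\causes e$ with $\beta\in\mathcal{W}\setminus\mathcal{C}$ --- the genuinely disjunctive/CNF cause anticipated by the $a\vee b\causes c$ example of the Approach section, whose clauses $\conj{\beta}$ cannot be assembled when $\Phi=\emptyset$.

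The final step is screening off. In the DAG-induced distribution (Definition~\ref{def:treedistrib}) $e$ occurs with its associated label-probability exactly when its true parent condition $\beta$ is satisfied and with the noise-determined probability otherwise, so among the non-descendants of $e$ its distribution depends on $\beta$ alone. Temporal priority for $\alpha\causes e$ gives $\Probab{\alpha}>\Probab{e}$, placing $\alpha$ upstream of $e$, i.e.\ among its non-descendants; the Markov property of the induced distribution then yields $e\perp\alpha\mid\beta$, that is $\Pcond{e}{\alpha\wedge\beta}=\Pcond{e}{\beta}$, which is precisely ``$\beta$ screens off $\alpha$ from $e$''. Since marginal and joint probabilities vary monotonically under uniform noise (as argued in Theorem~\ref{th:soundness}), this conditional independence persists for all $\epsilon_+=\epsilon_-\in[0,1)$.

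I expect the main obstacle to be the dichotomy step together with a careful treatment of the conditioning: because Definition~\ref{def:treedistrib} assigns zero mass to ``irregular'' profiles (an event present without all its parents), the conditional probabilities must be read off on the support of the generative distribution, and one must verify that the surviving $\alpha$ is genuinely screened off by the true CNF cause rather than merely marginally dependent on $e$.
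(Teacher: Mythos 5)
Your proposal is correct and follows the same top-level strategy as the paper---restrict the machinery of Theorem~\ref{th:soundness} to formulas with $\conj{\varphi}\subseteq G$---but it is worth noting that the paper's own proof is only a three-sentence pointer to that theorem: it observes that with $\Phi=\emptyset$ every inferred pattern is purely conjunctive, identifies the step-$(i)$ formulas of Theorem~\ref{th:soundness} with the correctly inferred co-occurrence patterns, and then simply \emph{asserts} that the surviving spurious patterns are screened off by true non-conjunctive formulas. The two points on which you spend most of your effort are exactly the ones the paper omits: the dichotomy argument (if every true pattern for $e$ lay in $\mathcal{C}$, the maximality of $\Pconj{\beta}{e}=\Probab{e}$ would force the regularization to prune $\alpha\causes e$, so a surviving spurious pattern witnesses some $\beta\causes e\in\mathcal{W}\setminus\mathcal{C}$), and the explicit derivation of $\Pcond{e}{\alpha\wedge\beta}=\Pcond{e}{\beta}$ from the local Markov property of the DAG-induced distribution. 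What your route buys is an actual argument for part (2) rather than an appeal to analogy; the costs are that you inherit the informal maximality/pruning reasoning of Theorem~\ref{th:soundness} wholesale, and that your step from $\Probab{\alpha}>\Probab{e}$ to ``$\alpha$ is a non-descendant of $e$'' deserves its one-line justification (in the generative distribution of Definition~\ref{def:treedistrib} a descendant of $e$ occurs only when $e$ does, so its marginal cannot exceed $\Probab{e}$). The caveat you flag about conditioning on the support of a distribution that assigns zero mass to irregular profiles is a genuine subtlety that the paper does not address at all.
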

\begin{proof} Consider the proof of the previous theorem. In this case, we are dealing with formulas such that $\conj{\varphi} \subseteq G$, i.e., formulas do not have any disjunctive component. All the derivations for Theorem~2 can be carried out in this context, notice that: formulas considered in step $(i)$ of such a proof are those which are purely conjunctive and correctly inferred. Similarly, formulas in $(ii)$ are those that screen off the false patterns, but are incorrectly present in  ${\cal D}_{\text{BIC}}$.
\end{proof}

\noindent
This theorem states that, even if one is neither willing to pay the cost
of augmenting CAPRI's input with patterns  nor able to find any
suitable one, the algorithm is still capable of inferring singleton
and conjunctive instances of $\causes$ relation, whose members are
either true or part of a more complex  types of patterns that fall outside \ALGNAME{}'s scope.
An immediate corollary of these two theorems is that
\ALGNAME{} works as specified, when it is fed with all possible co-occurrence
patterns.

\begin{myco}
  \label{cor:conj-exhaustive}
  Under the hypothesis of the above theorems, $D(\emptyset)\models
  \Sigma \iff D(\mathcal{C}) \models \Sigma$. $\Box$
\end{myco}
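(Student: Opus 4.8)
The plan is to prove both directions at once by showing that the runs $\Phi=\emptyset$ and $\Phi=\mathcal{C}$ return \emph{the same} output DAG, hence induce the same set $\Sigma$ of implicit patterns. Since, by the definition of $\models$, the set $\Sigma$ is uniquely determined by a single run, the biconditional $D(\emptyset)\models\Sigma \iff D(\mathcal{C})\models\Sigma$ is exactly the assertion that feeding all co-occurrence hypotheses is redundant relative to feeding none. I would therefore compare the two executions stepwise: nodes, prima-facie edges, and then the BIC filtering.

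First I would match the node sets. Every $\varphi\in\mathcal{C}$ is purely conjunctive, so each of its clauses is a single atomic event and $\conj{\varphi}\subseteq G$. Hence in the $\mathcal{C}$-run the node set is $N = G\cup\bigl(\bigcup_{\varphi\in\mathcal{C}}\conj{\varphi}\bigr)=G$, identical to the $\emptyset$-run: lifting appends columns to $D$ but introduces no new node. Next I would match the prima-facie edges. In the $\mathcal{C}$-run, $\pi(j)$ is the atomic pairwise set $\{i\in G \mid \Gamma_{i,j},\Lambda_{i,j}>0\}$ — which coincides with the $\emptyset$-run — together with the clause-sets $\conj{\varphi}$ of those hypotheses $\varphi\causes j\in\mathcal{C}$ satisfying $\Gamma_{\varphi,j},\Lambda_{\varphi,j}>0$. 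It then suffices to show this second contribution is subsumed by the first. For a conjunct $i_p$ of such a $\varphi=i_1\wedge\dots\wedge i_k$, temporal priority propagates by marginal monotonicity, $\Probab{i_p}\geq\Probab{\varphi}>\Probab{j}$, and probability raising propagates by the compositional property of co-occurrence patterns established in the Approach and Methods ($\varphi\causes j \Rightarrow i_p\causes j$). Thus each such $i_p$ already lies in the atomic pairwise set, the additional edges are redundant, and the two prima-facie DAGs ${\cal D}_{\text{pf}}$ coincide.

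Finally I would argue that BIC acts identically. Because the node set is $G$ in both runs, both the DAG-induced distribution of Definition~\ref{def:treedistrib} and the dimension count $\text{dim}(\cdot)$ depend only on $N$ and $\pi$, not on the surplus lifted columns of $D(\mathcal{C})$; the node probabilities entering the likelihood are computed from the original columns of $D$ in either case. Hence the BIC score is the same, ${\cal D}_{\text{BIC}}$ is the same, and so is $\Sigma$, which closes the biconditional.

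The main obstacle is the probability-raising half of the compositional step: whereas temporal priority propagates to conjuncts immediately from the monotonicity of marginals, the implication $\Pcond{j}{\varphi}>\Pcond{j}{\~\varphi}\Rightarrow\Pcond{j}{i_p}>\Pcond{j}{\~{i_p}}$ fails for arbitrary distributions and must be argued in the idealized regime inherited from the preceding theorems ($m\to\infty$, uniform $\epsilon_+=\epsilon_-$), reusing the joint-probability identities (e.g.\ $\Pconj{\varphi}{j}=\Probab{j}$ for genuine patterns) that drove the proof of Theorem~\ref{th:soundness}. A secondary, lighter check is confirming that the extra columns of $D(\mathcal{C})$ cannot perturb the likelihood fit — which holds precisely because those columns are not nodes of $\mathcal{D}$ and so never enter the induced distribution.
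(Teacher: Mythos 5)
Your overall strategy --- tracing both executions step by step and showing they converge to the same output DAG --- is more operational than the paper's, which simply reads the corollary off the \emph{output characterizations} in Theorems~2 and~3 (both runs asymptotically return exactly the true conjunctive patterns, hence the same $\Sigma$), without ever comparing the intermediate DAGs. Unfortunately your intermediate claim that ``the two prima-facie DAGs ${\cal D}_{\text{pf}}$ coincide'' is false, and this is a genuine gap rather than a presentational one. The edge contribution of a lifted hypothesis $\varphi \causes j \in \mathcal{C}$ is \emph{not} subsumed by the pairwise atomic tests: a \emph{spurious} conjunctive hypothesis can pass both Suppes conditions as a lifted column while one of its conjuncts fails pairwise probability raising. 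Concretely, take a true model $a \causes j$ with $b$ independent of both, $\Probab{a}=0.9$, $\Probab{b}=0.5$, $\alpha(j)=0.3$ (so $\Probab{j}=0.27$). Then $\Probab{a\wedge b}=0.45>\Probab{j}$ and a short computation gives $\Pcond{j}{a\wedge b}=\Probab{j}/\Probab{a} > \Probab{j}(1-\Probab{b})/(1-\Probab{a}\Probab{b})=\Pcond{j}{\~{a\wedge b}}$ whenever $\Probab{a}<1$, so $a\wedge b \causes j$ is prima facie in the $\mathcal{C}$-run and deposits $\conj{a\wedge b}=\{a,b\}$ into $\pi(j)$; yet $\Pcond{j}{b}=\Probab{j}=\Pcond{j}{\~b}$, so $b$ is \emph{not} a prima facie parent of $j$ in the $\emptyset$-run. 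The compositional property $\varphi\causes j \Rightarrow i_p \causes j$ that you invoke is established (via $\Pconj{\varphi}{j}=\Probab{j}$) only for \emph{genuine} patterns, and restricting to the idealized regime $m\to\infty$ does not help, since spurious conjunctive hypotheses of this kind survive in the infinite-sample limit.

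Because the prima-facie DAGs differ, your final step --- that BIC ``acts identically'' since it sees the same node set and the same columns of $D$ --- no longer applies as stated: the two regularizations start from different candidate parent sets, and you must show that the surplus spurious parents contributed by lifted conjunctive hypotheses are exactly the ones BIC prunes. That is precisely the content of step $(ii)$ in the proof of Theorem~2 (spurious $\varphi^\ast$ have $\Pconj{\varphi^\ast}{e}<\Probab{e}$, so the likelihood fit prefers the true parent set), and of Theorem~3 for the $\Phi=\emptyset$ side. The repair is therefore to abandon the claim of coincidence at the prima-facie stage and instead argue, as the paper implicitly does, that \emph{after} regularization both runs retain exactly the true co-occurrence patterns: Theorem~2 applied with $\Phi=\mathcal{C}\supseteq\mathcal{W}\cap\mathcal{C}$ for one direction, Theorem~3 for the other. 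Your node-set observation ($\conj{\varphi}\subseteq G$ for conjunctive $\varphi$, so $N=G$ in both runs) is correct and is the one ingredient of your argument worth keeping.
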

In practice, this algorithm,  though still exponential, is certainly
less computationally intensive. For instance, when using $\mathcal{C}$ than with
$\mathcal{U}$, it can trade off computational complexity against
expressivity of the inferred patterns.


\section{Results: synthetic data} \label{sec:experiments}

\paragraph{Setting for comparison.} The performance of all the algorithms were evaluated empirically with four different types of  topologies: $(i)$  {\em trees}, $(ii)$  {\em forests}, $(iii)$ {\em DAGs without disconnected components} and $(iv)$ {\em DAGs with disconnected components}.
Irrespective of the topology considered,  we exclusively used \at{} events, which implies that either singleton or co-occurrence patterns were used in the experiments. Based on Corollary~3, it sufficed to run \ALGNAME{} with $\Phi=\emptyset$.  This strategy is consistent with the fact that our algorithm can infer more general formulas if an input ``set of putative causes, $\Phi \neq \emptyset$'' is given in addition -- a fact which, without the care taken, could have unfairly and favorably biased  our analysis in the more general situation. For the sake of completeness, however, we also tested specific CNF formulas, as shown in the next sections.

Type $(i-ii)$ topologies are DAGs constrained to have nodes with a unique parent;  condition $(i)$ further restricts such DAGs to have  no disconnected components, meaning that all nodes are reachable from a starting root $r$. Practically, condition $(i)$ satisfies  $|\pi(j)| =1$  for $j\neq r$, and $\pi(r) =\emptyset$, while in $(ii)$ we allow more roots to be present. 
This kind of topologies can be reconstructed with either ad-hoc algorithms~\cite{Loohuis:2014im,desper_1999,szabo} or general DAG-inference techniques~\cite{spirtes2000causation,tsamardinos2003algorithms,beerenwinkel_2007,bic_1978,bde_1995}. 
Type $(iii-iv)$ topologies are DAGs which have either a unique starting node $r$, or a set of independent sub-DAGs. Similarly, condition $(iii)$ satisfies  $|\pi(j)| \geq 1$  for $j\neq r$, and $\pi(r) =\emptyset$, while in $(iv)$ we allow more roots to be present, as it was in $(ii)$.  
This kind of topologies are not  reconstructible  with tree-specific algorithms, and thus only algorithms
in~\cite{spirtes2000causation,tsamardinos2003algorithms,beerenwinkel_2007,bic_1978,bde_1995} could be used for comparison. 
The algorithm for the synthetic data generation is described in the following paragraph.

\paragraph{Generating synthetic data.}  Let $n$ be the number of events we want to include in a DAG and let $p_{\min}=0.05$, $p_{\max}=0.95$, $p_{\min}=1-p_{\max}$. A {\em DAG without disconnected components} (i.e. an instance  of type $(iv)$ topology) with maximum depth $\log n$ and where each node has at most $w^\ast$ parents (i.e. $|\pi(j)| \leq w^\ast$, for $j\neq r$) is generated as follows:
\begin{algorithmic}[1]
\STATE pick an event $r\in G$ as the root of the DAG;
\STATE  assign to each  $j \neq r$ an integer  in the interval $[2, \lceil {\log n} \rceil]$ representing its depth in the DAG  ($1$ is reserved for $r$),  ensure that  each level has at least one event;
 \FORALL{events $j \neq r$} \STATE{let $l$ be the level assigned to $e$;}
 \STATE{pick $|\pi(j)|$  uniformly  over $(0,w^\ast]$, and accordingly define $\pi(j)$ with  events selected among those at which level $l-1$ was assigned;} 
 \ENDFOR
 \STATE{assign $\alpha(r)$ a random value in the interval $[p_{\min},p_{\max}]$;}
 \FORALL{events $j \neq r$} 
  \STATE{let $y$ be a random value in the interval $[p_{\min},p_{\max}]$, assign 
  \[
  \alpha(j) =  y \prod_{x \in \pi(j)} \alpha(x)\, ;
  \]} 
 \ENDFOR
\RETURN the generated DAG;
\end{algorithmic}

When an instance of type $(iv)$ topology  is to be generated, we repeat the above algorithm to create its constituent DAGs. In this case, if multiple DAGs are generated, each one with randomly sampled $n_i$ events we require that $|G|=\sum n_i=n$. 
When instances of type $(i)$ topology are required $w^\ast=1$, and by iterating multiple independent sampling instances of  type $(ii)$ topology  are generated. When required DAGs were sampled, these are  used to generate an instance of the input matrix $D$ for the reconstruction algorithms.

\subsection{Performance with different topologies and small datasets}
Here   we estimate the performance of  \ALGNAME{} for datasets with sizes that are likely to be found in currently available cancer databases, such as The Cancer Genome Atlas, TCGA~\cite{tcga}, i.e. $m \approx 250$ samples, and $15$ events.  
 The results are shown in Figure~\ref{fig:multipanel-1}, for topologies $(i)$ and $(ii)$, and Figure~\ref{fig:multipanel-2}, for  topologies $(iii)$ and $(iv)$. There, we show all the results obtained by running the algorithm with bootstrap resampling, although results (data not shown) without this pre-processing leave the conclusions unaffected.

Results suggest a  trend, as to be expected: namely, performance degrades as noise increases and sample size diminishes. However, it is particularly interesting to notice that, in various settings, \ALGNAME{} almost converges to a perfect score even with these small datasets. This happens for instance with type $(i-ii)$ topologies, where the Hamming distance almost drops to $0$ for $m\geq 150$. In general, it is also clear that reconstructing forests is easier than trees, when the same number  of events $n$ is considered. This is a consequence of the fact that, once $n$ is fixed, forests are likely to have less branches since every tree in the forest has less nodes. When reconstructing type $(iii-iv)$ topologies, instead, the convergence-speed of \ALGNAME{} to lower Hamming distance is slower, as one might reasonably expect. In fact, in those settings the distance never drops below $3$, and more samples would be required to get a perfect score. We consider this to be a remarkable result, when compared to the worst-case Hamming distance value of $15\cdot14=210$.
Panels of Figure~\ref{fig:multipanel-2} also suggest that disconnected DAGs are easier to reconstruct than connected ones, when a fixed number  of events is considered. Similarly to the above, this could be credited to the fact that the size of the conjunctive claims is generally smaller, for fixed $n$. With respect to the precision and recall scores, one may note that \ALGNAME{} seems to be quite robust to noise, since the loss in the score-values appear nearly unaffected by any increase in the noise parameter.

\subsection{Comparison with other reconstruction techniques}
We compare now with state-of-the-art approaches mentioned in the main text\footnote{Classic versions of the  IAMB and PC algorithm were further subjected to log-likelihood optimization to assign a direction to all of the computed non-oriented edges.  This additional feature is necessary to permit a fair comparison against various structural approaches, which, otherwise, would be penalized with a worse Hamming distance, since these algorithms, in principle, can return non-oriented edges. Note that progression models, by their very nature, consist only of oriented structures.}, which we divide into three categories: {\em structural} - Incremental Association
  Markov Blanket (IAMB) and PC algorithm -, {\em likelihood} - Bayesian
  Information Criterion (BIC) and Bayesian
  Dirichlet (BDE) and {\em hybrid}  - Conjunctive Bayesian Networks (CBN) and Cancer Progression Inference  with Single Edges (CAPRESE).  For all the algorithms we used their standard  {\sc r} implementations: for IAMB, BDE and BIC we used  package \texttt{bnlearn}~\cite{bnlearn}, for the PC algorithm we used package \texttt{pcalg},  for \ALGNAMEPLOS{} we used  \texttt{TRONCO}~\cite{tronco} (first release) and for CBN we used \texttt{h-cbn}~\cite{h-cbn}.

Clearly, other algorithms exist in the literature, but we selected those which satisfied at least one of the following criteria: earlier, they have proven to be more effective in inferring ``causal'' claims, i.e., they are considered the best algorithms to infer ``causal networks'' (i.e., IAMB and PC); they regularize the Bayesian over-fit (i.e., BDE and BIC); they assume a prior (i.e. BDE) or they were developed specifically for cancer progression inference (i.e., CBN and \ALGNAMEPLOS{}). 
Prominent among the ones absent in this study are the following: {\em Grow and Shrink}~\cite{gs}, which preliminary analysis have shown to be very similar to IAMB, and the {\em DiProg algorithm}~\cite{farahani}, which unrealistically requires advanced knowledge of input error rate to reconstruct a model; note that this kind of information is not generally available {\em a priori}.

Notice that we selected all the algorithms capable of inferring generic DAGs but \ALGNAMEPLOS{}~\cite{Loohuis:2014im}, which can only be applied to infer trees or forests (i.e., type $(i-ii)$ topologies). In the literature there exist other approaches specifically tailored for such topologies, e.g.,~\cite{desper_1999, szabo}; however, since  in~\cite{Loohuis:2014im} it is shown that \ALGNAMEPLOS{}  performs better than other approaches, we assume no loss of information in restricting our study. We place  \ALGNAME{} in the {\em Hybrid} category, though we clearly compare its performance with all the other approaches in order to quantify its suitability for reconstruction of all classes of topologies, as defined earlier.

The general trend is summarized in Figure~\ref{fig:comparison}, where we rank all of these algorithms  according to their median performance, estimated as a function of noise and sample size, and provide the parameters  used for comparison.  In Figure~\ref{fig:structural}, we compare \ALGNAME{}  with the structural approaches (IAMB and PC).  In Figure~\ref{fig:likelihood}, we compare it with the likelihood approaches (BIC and BDE) and, finally, in   Figure~\ref{fig:hybrid}, we compare it with the hybrid algorithms. We remark that,  because of the high computational cost of running CBNs, which relies on a nested Expectation-Maximization algorithm with Simulated Annealing, the number of ensembles performed is limited to $100$ for CBNs, while it is $1000$ for all other algorithms. Though this strategy provides less robust statistics for CBNs (i.e., less ``smooth'' performance surfaces), it is still sufficiently accurate to indicate the general comparative trends and relative performance efficiency.

\subsection{Reconstruction without hypotheses: disjunctive patterns}

Recall that our algorithm expects as input all the hypothesized patterns to infer more expressive logical formulas, i.e., hypotheses with pure CNF formulas or even disjunctive patterns over atomic events. Nonetheless, it is instructive to investigate its performance under two specific conditions, especially to clarify the robustness with respect to imperfect regularities (the, e.g, ``noisy and''): namely, $(i)$ without hypotheses ($\Phi=\emptyset$) and $(ii)$ for datasets sampled from topologies with {\em disjunctive} patterns.

To generate the input dataset, we have to modify the generative procedure used for the other tests, thus reflecting the switch from co-occurrence to disjunctive patterns. This task is actually rather simple, since we just change the labeling function $\alpha$ to account for the probability of picking any subset of the clauses in the disjunctive pattern, while omitting the others.  We use DAGs with  $10$ events and disjunctive patterns with at most $3$ atomic events involved, which is a reasonable size, given the events considered. Clearly,   this setting is generally harder  than the one shown in Figures~\ref{fig:structural}--~\ref{fig:hybrid}, thus we expect performance to be somewhat inferior.
  Here we compare \ALGNAME{} with all the algorithms used so far, and we show  the result of  this comparison in Figure~\ref{fig:disjunctive-DAG-bestof}, where $\Phi=\emptyset$, as noted earlier. The plot clearly confirms the trends suggested by previous analyses: namely, \ALGNAME{} infers the correct patterns more often than the others. {Note also that the performance is measured on the reconstructed topology only, since, without input hypotheses, the algorithm evaluates only co-occurrence types of patterns, and does not allow different types of relations (e.g. disjunctions) to be inferred automatically.} However, as anticipated, observed performance improvement is now much lower, and the Hamming distance fails to rise above $4$. Furthermore, convergence to optimal performance  was not observed for $m \leq 1000$, and  it appears not to be reachable even for $m \gg 1000$ (at least so, when no  hypotheses are used). It is also possible that, as $n$ and the number of maximum disjunctive patterns increase, the result could be an even less satisfactory speed of convergence.

\subsection{Reconstruction with hypotheses: synthetic lethality}

We wondered whether \ALGNAME{} would be able to infer synthetic lethality relations, when these are directly hypothesized in the input set $\Phi$. We started with a test of the simplest form: e.g., 
$
[a \oplus b \;\causes\; c],
$
for a set of events $G=\{a,b,c\}$, where we force progression from $a$ to $c$ to be preferential, i.e. it appears with $0.7$ probability, whereas $b$ to $c$ does so with only $0.3$ probability, thus implying that samples involving $(a \wedge \bar{b})$ 
will be more abundant than those involving $(\bar{a} \wedge b)$. 
Despite this being the smallest possible synthetically lethal pattern, the goal was to estimate the {\em probability} of such a pattern being robustly inferable, when $\Phi=\{a \oplus b \causes c\}$, and its dependence on the sample size and noise. We measured the performance of all the algorithms, with an input lifted according to the pattern so that all algorithms start with the same initial pieces of information. The performance metric estimates how likely an edge from $a \oplus b$ to $c$ could be found in the reconstructed structures.

We show the results of this comparison  in Figure~\ref{fig:multipanel-lethality}. We note that  \ALGNAME{} succeeds in inferring the synthetic lethality relation more frequently than $93\%$ of the times, irrespective of the noise and sample size used. More precisely, with $m\geq60$ the algorithm infers the correct pattern under any execution, thus suggesting that \ALGNAME{}, with the correct input hypotheses, is able to infer complicated structures, many of which could have high biological significance. Naturally, it would be reasonably expected that the performance of any of these algorithms would drop, were the target relations part of a bigger model. 

\subsection{Execution time}
We report an evaluation of the execution time for all the algorithms we tested, but CBN - which computation time is more than one order of magnitude higher than the competing techniques. Two distinct settings of experiments were used: Setting $(A)$: $n=10$ events, $m=100$ samples, $\nu=0$ noise; Setting $(B)$: $n=10$, $m=100$, $\nu=.10$. Results account for the average time of execution as of $100$ randomly generated topologies (one dataset sampled per topology). Time unit is {\em second} and the test was performed on a MacBook with 
$2.3$ GHz Intel i7 processor, $16$ Gb of RAM and Yosemite 10.9 OS.

To allow a fair comparison of CAPRI against the other algorithms we both executed the algorithm with and without bootstrap preprocessing, in order to asses the prima facie condition (Mann-Withney U test being performed in the former case). Execution timings are sorted according to mean time.

\begin{table}
\begin{center}
\begin{tabular}{c|c|c|c}
{\bf{Setting A} ($\nu = 0$)} & mean & median & standard deviation \\
 \hline
\ALGNAMEPLOS{} & $0.006$ & $0.005$ & $0.005$ \\
 \hline
BIC & $0.023$ & $0.022$ & $0.011$ \\
 \hline
IAMB  & $0.028$ & $0.027$ & $0.005$ \\
 \hline
\ALGNAME{} without bootstrap & $0.029$ & $0.029$ & $0.003$ \\
 \hline
BDE  & $0.041$ & $0.032$ & $0.063$ \\
 \hline
PC & $0.144$ & $0.112$ & $0.154$ \\
 \hline
\ALGNAME{} with bootstrap & $1.143$ & $1.056$ & $0.360$ \\
 \hline
 \end{tabular}
\end{center}

\begin{center}
\begin{tabular}{c|c|c|c}
{\bf{Setting B} ($\nu = .10$)}  & mean & median & standard deviation \\
 \hline
\ALGNAMEPLOS{} & $0.005$ & $0.005$ & $0.001$ \\
 \hline
BIC & $0.022$ & $0.022$ & $0.003$ \\
 \hline
IAMB  & $0.029$ & $0.028$ & $0.004$ \\
 \hline
\ALGNAME{} without bootstrap & $0.030$ & $0.029$ & $0.004$ \\
 \hline
BDE  & $0.030$ & $0.028$ & $0.010$ \\
 \hline
PC & $0.103$ & $0.094$ & $0.034$ \\
 \hline
\ALGNAME{} with bootstrap & $0.719$ & $0.689$ & $0.138$ \\
 \end{tabular}
\end{center}
\end{table}

\section{Biological examples}

\subsection{Atypical Chronic Myeloid Leukemia}
\subsubsection*{Input hypotheses for CAPRI (supervised mode)}
By fetching the literature we selected the following patterns to input as \ALGNAME{}'s hypotheses:
\begin{itemize}
\item[$(1)$] ``exclusivity among \asxl{} and \sfb{} mutations'' \cite{lin2014sf3b1}: 
\begin{center}
(\asxl{} {\em Nonsense point} $\oplus$ \asxl{} {\em Ins/del} )  $\oplus$ \sfb{} {\em Missense point}
\end{center}

\item[$(1)$] ``exclusivity among \tet{} and \idh{} mutations'' \cite{figueroa2010leukemic}: 
\begin{center}
(\tet{} {\em Nonsense point} $\oplus$ \tet{} {\em Missense point} $\oplus$ \tet{} {\em Ins/del}  )  $\oplus$ \idh{} {\em Missense point}
\end{center}

\end{itemize}
These patterns were used to build CAPRI's hypotheses which were tested against all  events  which do not appear in  the above pattern itself, e.g., pattern (1) was tested against all  input events but those involving \asxl{} and \sfb{} genes.

As shown in the main text, among all, the following hypothesis gets selected by CAPRI 

\begin{center}
(\asxl{} {\em Nonsense point} $\oplus$ \asxl{} {\em Ins/del} )  $\oplus$ \sfb{} {\em Missense point} $\causes$ \cbl{} {\em Missense point}
\end{center}

\subsubsection*{aCML progression model with different techniques}
In Figure \ref{fig:results_real_acml} one can find the progression models reconstructed on the the \aCML{} dataset \cite{piazza_nat_gen}, with 3 different algorithms: $(i)$ CAPRESE, $(ii)$ BIC and $(iii)$ IAMB. These three techniques were chosen for this comparative study because of the overall better performance on synthetic tests (see Section 3.2-3.4 of the SI). The reconstruction obtained with CAPRI can be found in Figure 5 in the main text. 
For a biological interpretation of the results please refer to Section 4.2 in the main text. 

Note that all the progression model share some specific selective advantage relations, yet being substantially different. 
Relations involving \setbp{} and \asxl{} and those involving \tet{} and \ezh{} are, in fact, inferred by all the four algorithms, yet with different confidences and, sometimes, edge direction. 
In addition, IAMB does not include \cbl{} in the path involving \setbp{} and \asxl{}, and none of the algorithms but CAPRI can infer the complex pattern involving \asxl{} mutations of both types and \sfb{} (Figure 5 in the main text). 
Finally, note that IAMB and BIC are often not able to disambiguate the edge direction and this represent a major limit of these techniques with respect to CAPRI and CAPRESE. 

\subsection{Ovarian cancer}
\subsubsection*{Ovarian cancer progression model with different techniques}
We analyzed an ovarian cancer dataset  reporting chromosome-level
amplifications and deletions detected via Comparative Genome
Hybridization in \cite{knutsen2005interactive}. Similar to the case of
aCML, we used 4 different techniques to infer a progression models for
events included in the dataset:  CAPRI (unsupervised), CAPRESE,  BIC
and IAMB. Models and input dataset are shown in Figure \ref{fig:results_real_ovarian}. 
Like with aCML extraction, the progression models share only some of the inferred
relations. Among the most relevant differences is the conjunctive
pattern inferred  by CAPRI between the loss on chromosome \emph{5q}
($5q-$) and the gain on chromosome \emph{8q} ($8q+$) which is
predicted to select for a loss on $8p$; note also the aforementioned
limitation of BIC and IAMB in disambiguating the direction of some of
the inferred relations. Note that CAPRI infers a co-occurrence
pattern of selective advantage which is not input a priori as
hypothesis - unsupervised execution. In summary, CAPRI displays a better overall confidence on the reconstructed model. 

\newgeometry{top=1cm,left=1cm,bottom=1cm,right=1cm}
\begin{figure}[p]
\begin{center}
\centerline{\includegraphics[width=1.0\textwidth]{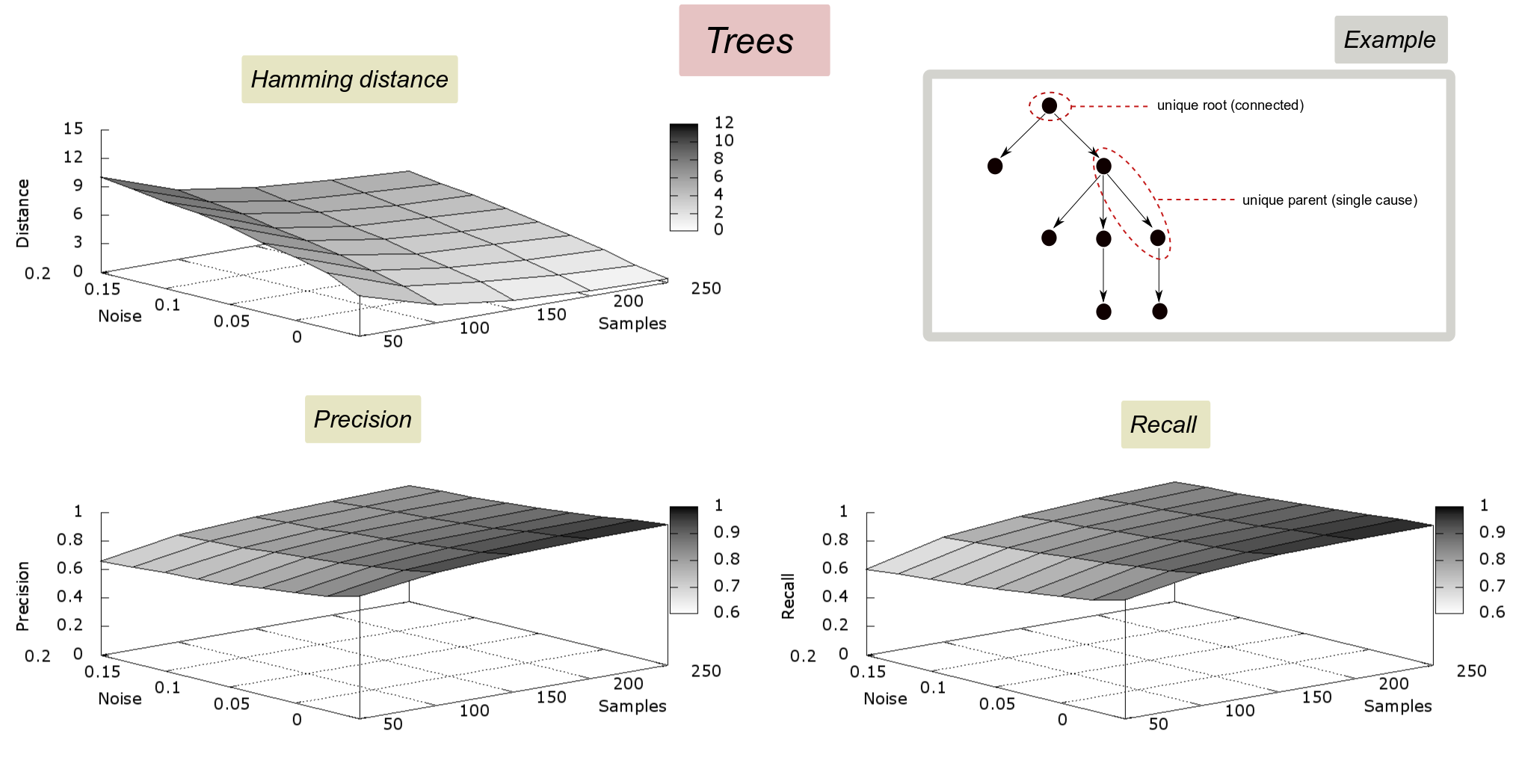}}
\centerline{\includegraphics[width=1.0\textwidth]{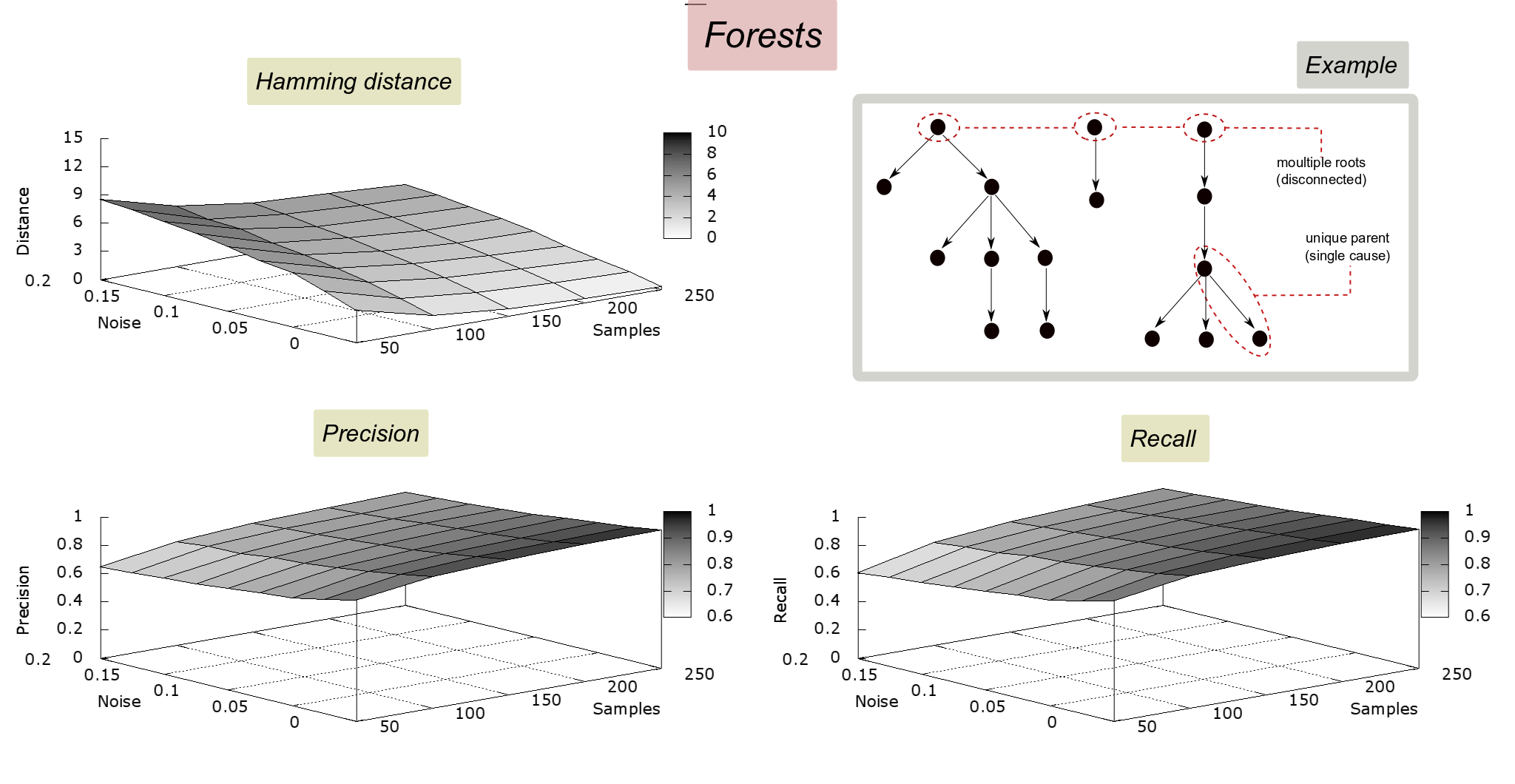}}
\end{center}
\caption[Reconstruction of trees and forests with small datasets]{{\bf Reconstruction of trees and forests with small datasets.} Hamming distance, precision and recall of \ALGNAME{} for synthetic data generated by trees (i.e.,  models with a singleton pattern per event and a unique progression), in top panels, and by  forests (i.e.,  models with a singleton pattern per event but  multiple independent progressions), in bottom panels. In both cases  $n=15$ events are considered, $m$ ranges from $50$ to $250$ and the noise rate ranges from $0\%$ to $20\%$. To have a reliable statistics, for each type of topology, we generate $100$ distinct progression models and, for each value of sample size and noise rate, we sample $10$ datasets from each topology. Thus, every performance entry  is the average of $1000$ reconstruction results.  Notice that Hamming distance  almost drops to $0$ for $m\geq 150$ and that precision and recall decrease very little  as noise increases.}
\label{fig:multipanel-1}
\end{figure}
\thispagestyle{empty}
\restoregeometry

\newgeometry{top=1cm,left=1cm,bottom=1cm,right=1cm}
\begin{figure}[p]
\begin{center}
\centerline{\includegraphics[width=1.0\textwidth]{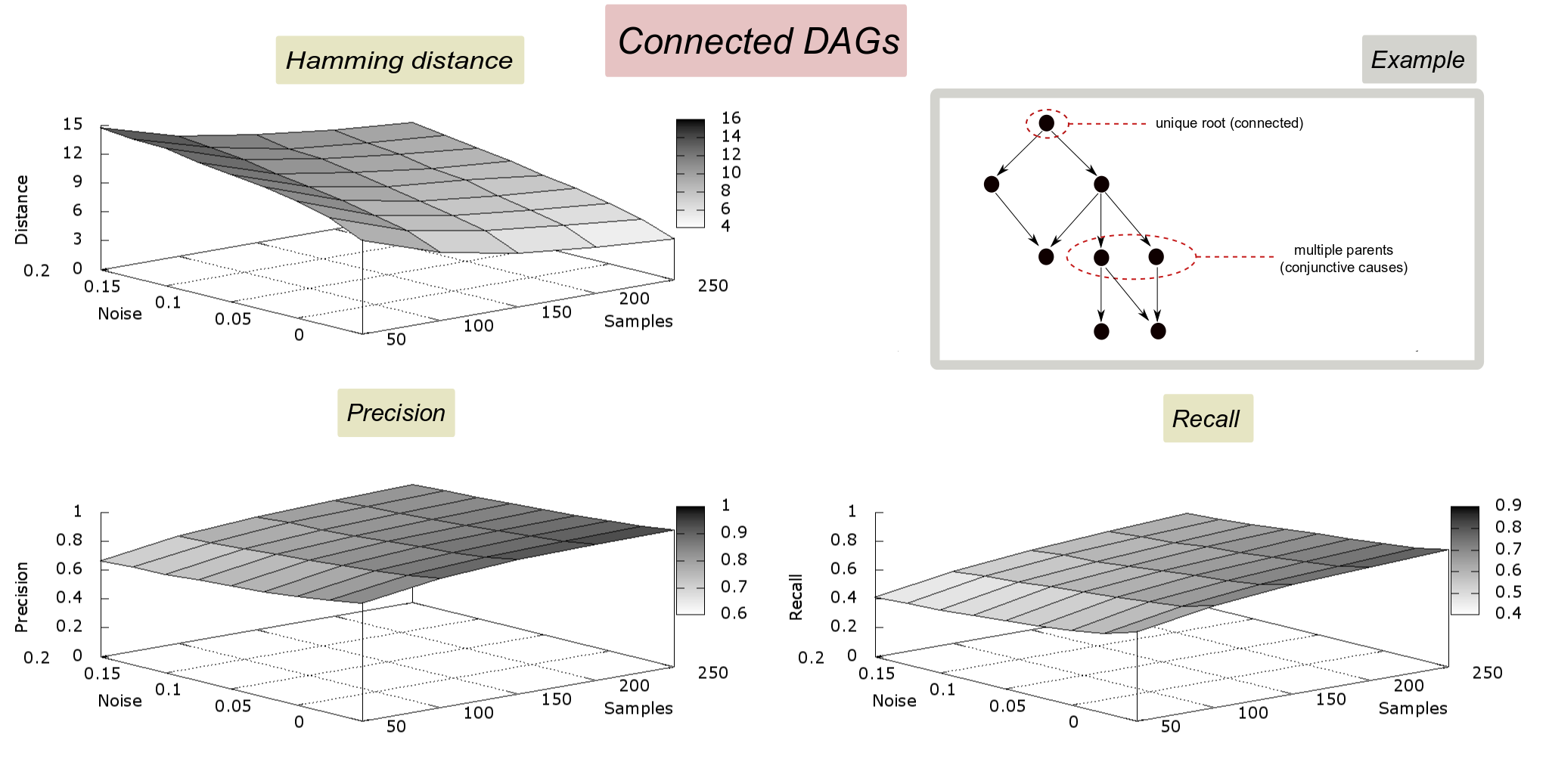}}
\centerline{\includegraphics[width=1.0\textwidth]{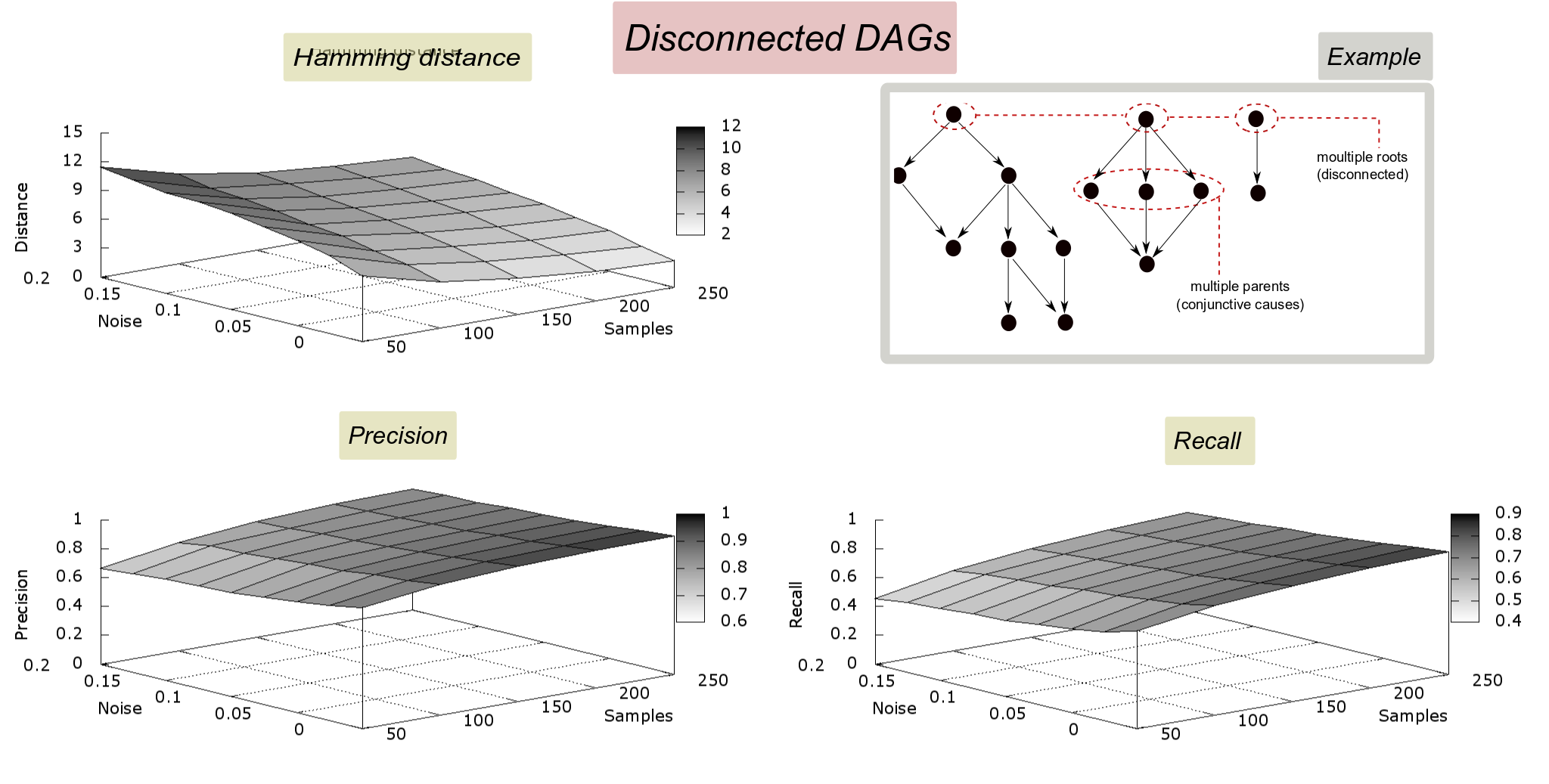}}
\end{center}
\caption[Reconstruction of DAGs with small datasets]{{\bf Reconstruction of DAGs with small datasets.} Hamming distance, precision and recall of \ALGNAME{} for synthetic data generated by connected DAGs  (i.e.,  models with either a singleton or co-occurrence pattern per event and a unique progression), in top panels, and by  disconnected DAGs (i.e.,  models with either a singleton or co-occurrence pattern per event and multiple progressions), in bottom panels. In both cases the same parameters as in Figure~\ref{fig:multipanel-1} are used ($n=15$, $50 \leq m\leq 250$, $0\% \leq \nu \leq 20\%$ and every performance entry  is the average of $1000$ reconstructions). In this setting, which is harder than the one shown in Figure~\ref{fig:multipanel-1},  Hamming distance  does not reach values below $3$ -- a reasonably small number for our purposes -- while  precision and recall still suffer very little  as noise increases. }
\label{fig:multipanel-2}
\end{figure}
\thispagestyle{empty}
\restoregeometry

\newgeometry{top=.2cm,left=1cm,bottom=1cm,right=1cm}
\begin{figure}[p]
\begin{center}
  \centerline{\includegraphics[width=0.68\textwidth]{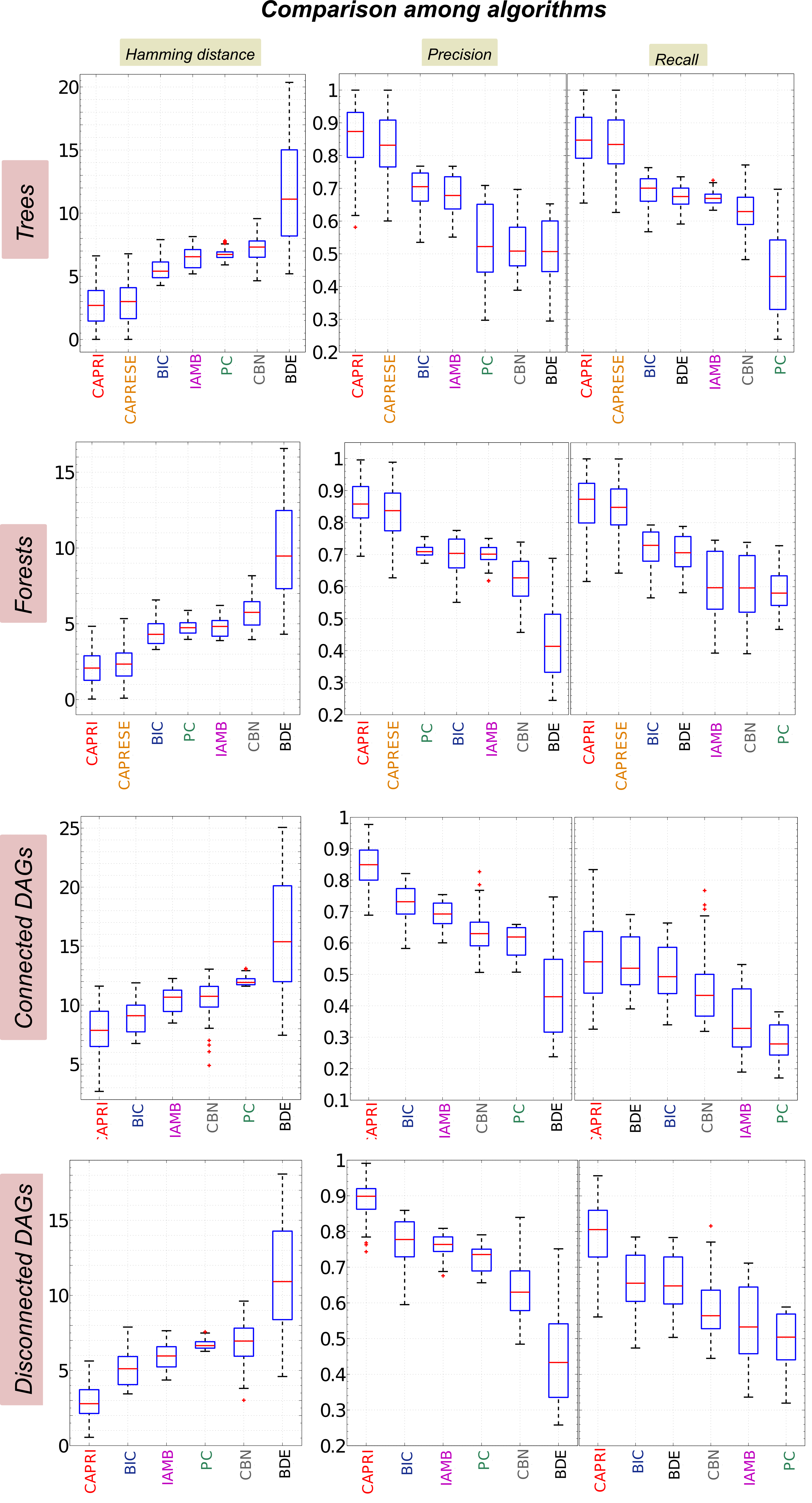}}
  \vspace{1.0cm}
     \begin{tabular}{c |  l | l }
\multicolumn{3}{c}{{\bf Parameter values}}  \\ \hline
$n$ & {\em number of events} &  $10$ \\
$m$ & {\em number of samples  } &  $ [50, 1000]$ \\
$\nu$ & {\em rate of false positives $\epsilon_+$ and negatives $\epsilon_-$} & $[0, 0.2]$ ($0\%$-$20\%$ noise rate)\\
$-$ & {\em ensemble size} & $1000$ ($100$ for CBN)
  \end{tabular}
\end{center}
\caption[Co-occurrence patterns: performance ranking]{{\bf Co-occurrence patterns: performance ranking.} We rank 
the algorithms we compared in Figure~\ref{fig:structural},~\ref{fig:likelihood}  and~\ref{fig:hybrid} according to their performance for the parameters in the  table. Rankings are divided according to the topology type and sorted according to the median performance.  
}
\label{fig:comparison}
\end{figure}
\thispagestyle{empty}
\restoregeometry

\newgeometry{top=.2cm,left=1cm,bottom=1cm,right=1cm}
\begin{figure}[p]
\begin{center}
\centerline{\includegraphics[width=0.88\textwidth]{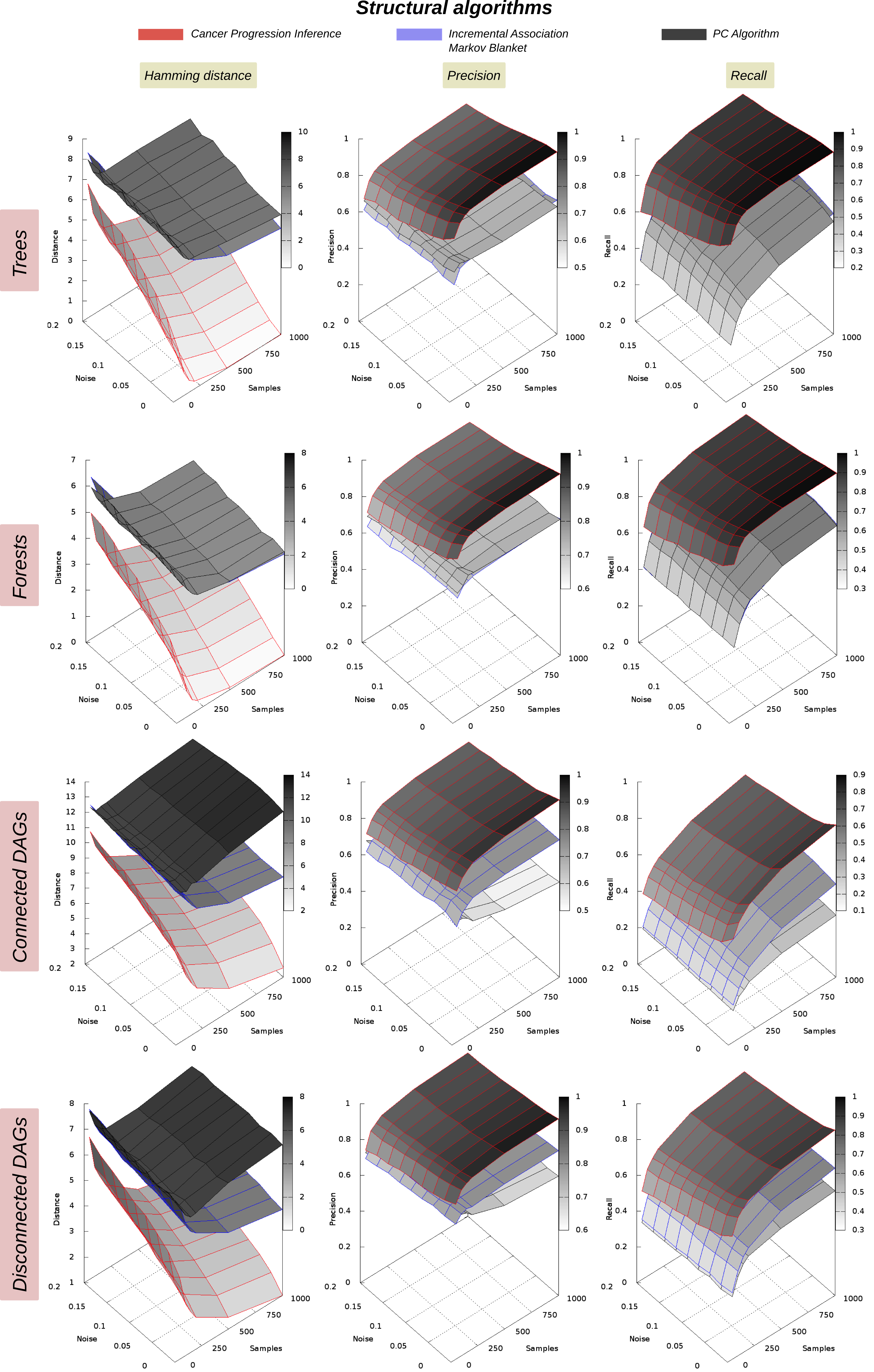}}
\end{center}
\caption[Comparison with related works: structural algorithms]{{\bf Comparison with related works: structural algorithms.} We compare  \ALGNAME{},  IAMB  and    the  PC algorithm  to infer {\em trees}, {\em forests}, {\em connected DAGs} and {\em disconnected DAGs} with  the parameters described in Table~\ref{fig:comparison}. Average Hamming distance, precision and recall are shown.}
\label{fig:structural}
\end{figure}
\thispagestyle{empty}
\restoregeometry

\newgeometry{top=.2cm,left=1cm,bottom=1cm,right=1cm}
\begin{figure}[p]
\begin{center}
\centerline{\includegraphics[width=0.88\textwidth]{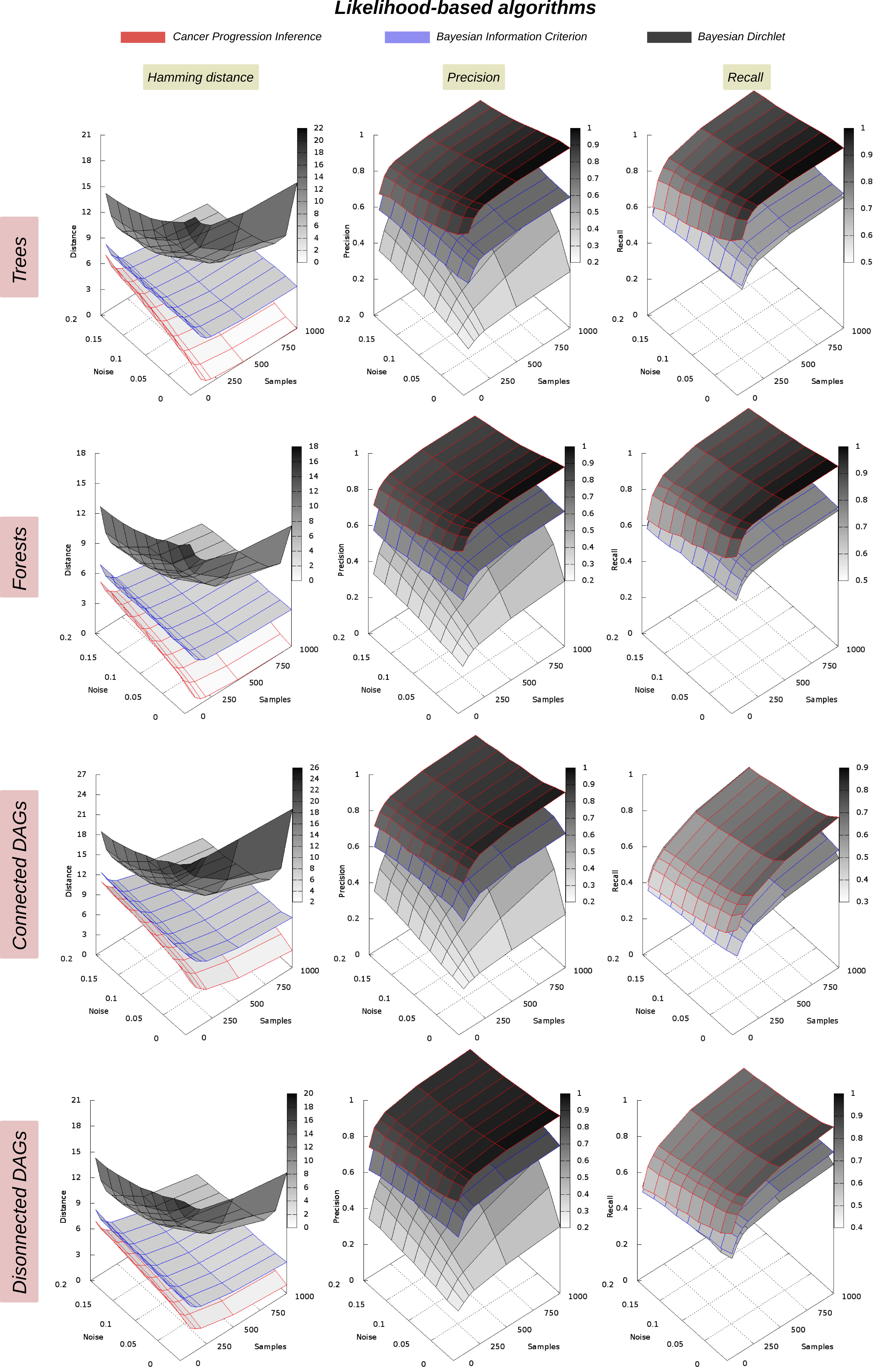}}
\end{center}
\caption[Comparison with related works: likelihood-based algorithms]{{\bf Comparison with related works: likelihood-based algorithms.} We compare  \ALGNAME{} against likelihood-based methods optimizing BIC and BDE scores to infer {\em trees}, {\em forests}, {\em connected DAGs} and {\em disconnected DAGs} with  the parameters described in Table~\ref{fig:comparison}.  Average Hamming distance, precision and recall are shown.}
\label{fig:likelihood}
\end{figure}
\thispagestyle{empty}
\restoregeometry

\newgeometry{top=.2cm,left=1cm,bottom=1cm,right=1cm}
\begin{figure}[p]
\begin{center}
\centerline{\includegraphics[width=0.88\textwidth]{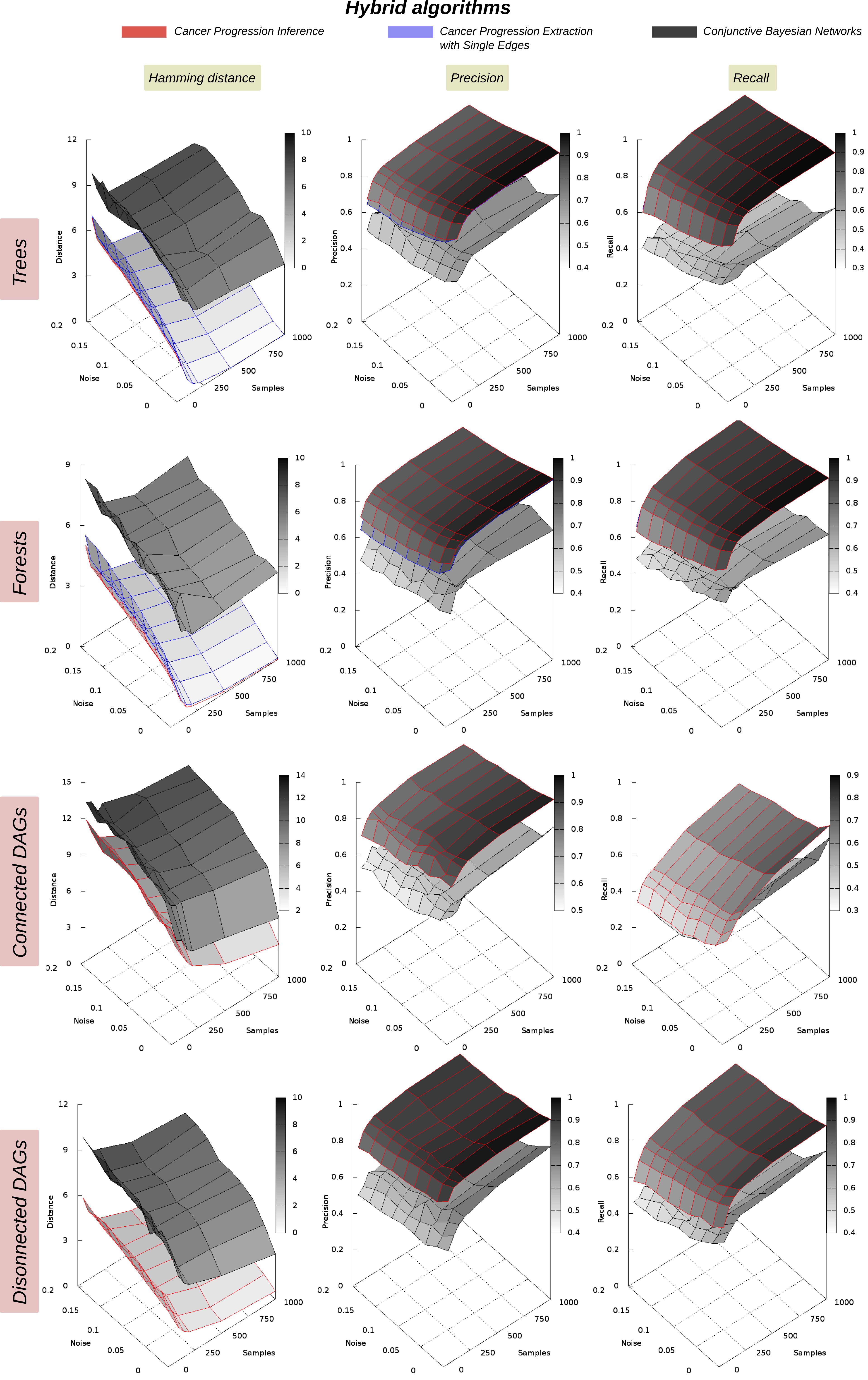}}
\end{center}
\caption[Comparison with related works: hybrid algorithms]{{\bf Comparison with related works: hybrid algorithms.} We compare  \ALGNAME{},  CBNs  and \ALGNAMEPLOS{} to infer {\em trees}, {\em forests}, {\em connected} and {\em disconnected DAGs} with  the parameters of Table~\ref{fig:comparison}   but, because of the computational cost of running CBNs with $100$ annealing steps, we reduced the number of ensembles performed as: $100$ for CBNs,  $1000$ for \ALGNAMEPLOS{}  and, for \ALGNAME{}, $100$ for DAGs and $1000$ otherwise. Average Hamming distance, precision and recall are shown.}
\label{fig:hybrid}
\end{figure}
\thispagestyle{empty}
\restoregeometry

\newgeometry{top=1cm,left=1cm,bottom=1cm,right=1cm}
\begin{figure}[p]
\begin{center}
\centerline{\includegraphics[width=1.0\textwidth]{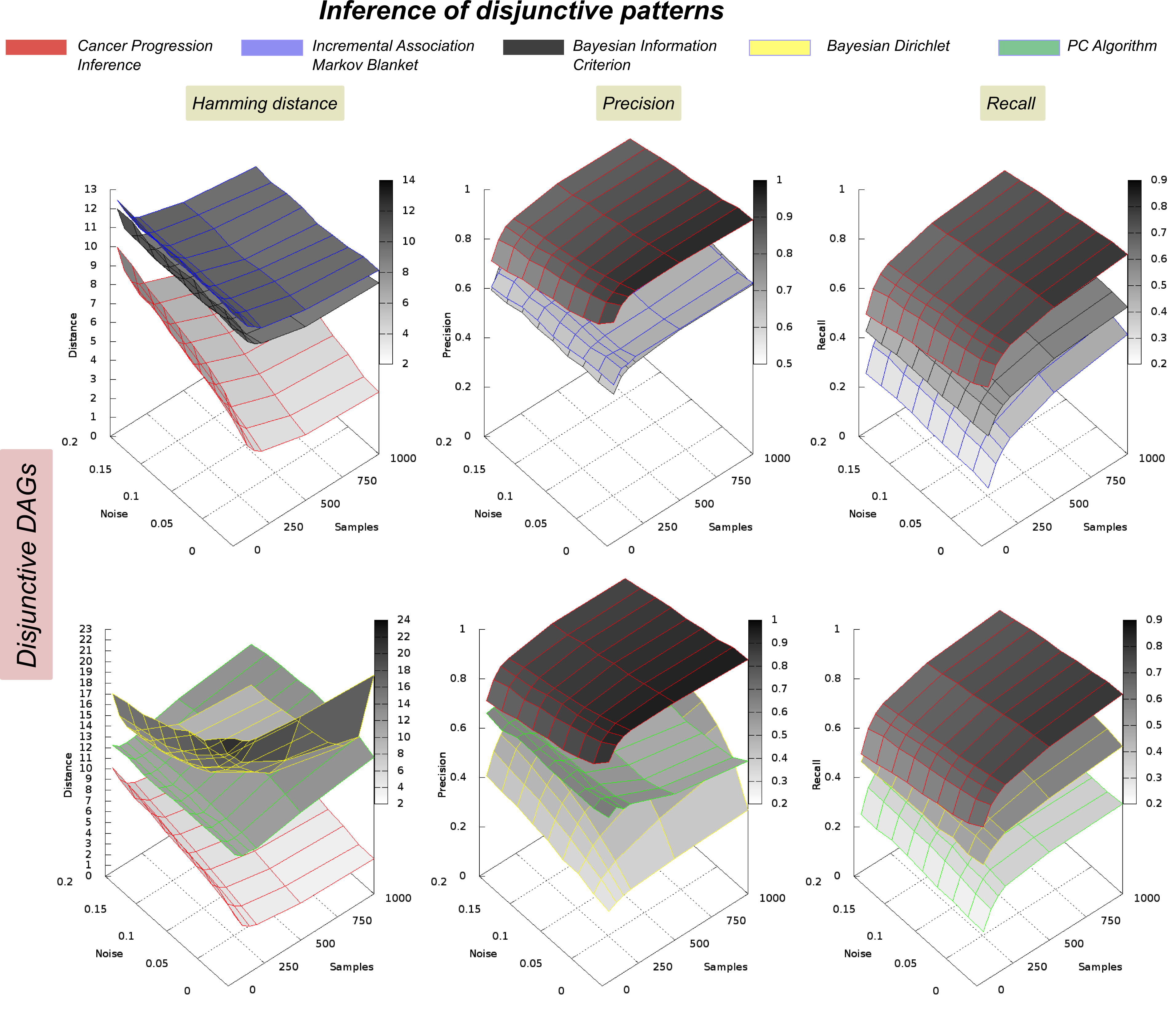}}
\end{center}
\caption[Reconstruction of disjunctive patterns with  no hypotheses]{{\bf Reconstruction of disjunctive patterns with  no hypotheses.}  We compare \ALGNAME{} against all the algorithms to infer progressions with disjunctive patterns. In top panel we show IAMB as the best structural algorithm, and the BIC score as
the best  among likelihood-based methods, according to Table~\ref{fig:comparison}. In bottom panel we compare the other algorithms. No hypotheses ($\Phi=\emptyset$) are given as input to \ALGNAME{}. Input data is  generated by  DAGs with  $10$ atomic events and disjunctive patterns with at most $3$ atomic events involved. Sample size ranges from $50$ to $1000$, noise rate from $0\%$ to $20\%$ and $1000$ ensembles are generated for each configuration of noise and sample size. This setting is generally harder  than the one shown in Figures~\ref{fig:structural}--~\ref{fig:hybrid}. Hamming distance, precision and recall are shown and confirm that this type or pattern is  harder than the co-occurent one to be inferred, hinting at the difficulty of modeling unbalanced confluent progressions.
}
\label{fig:disjunctive-DAG-bestof}
\end{figure}
\thispagestyle{empty}
\restoregeometry

\newgeometry{top=1cm,left=1cm,bottom=1cm,right=1cm}
\begin{figure}[p]
\begin{center}
\centerline{\includegraphics[width=1.0\textwidth]{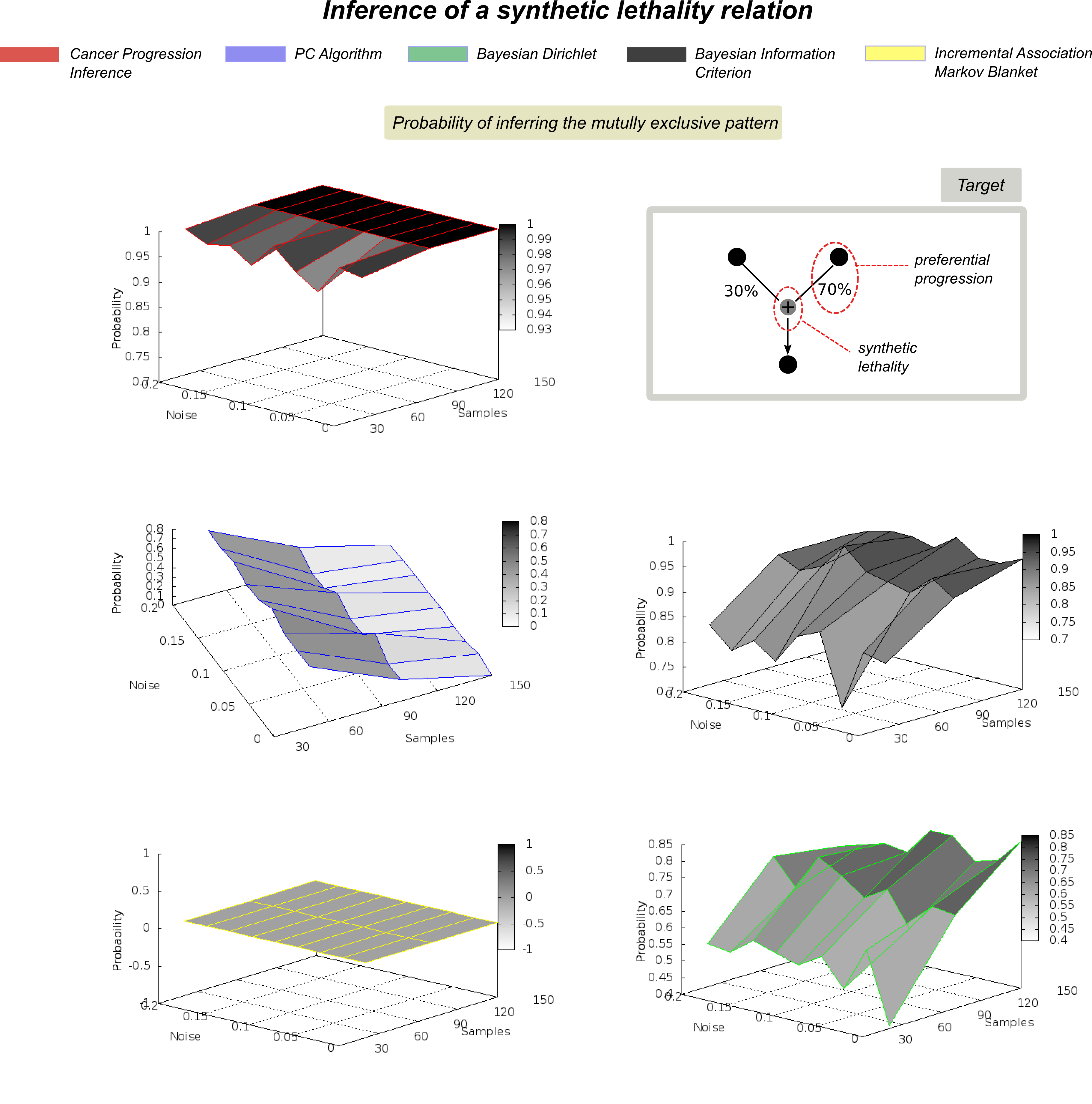}}
\end{center}
\caption[Reconstruction with hypotheses: synthetic lethality]{{\bf Reconstruction with hypotheses: synthetic lethality.} 
We show the  {\em average probability} of inferring a claim $a \oplus b \causes c$ ({\em synthetic lethality}), when this is provided in the input set $\Phi$.  We show such a probability for \ALGNAME{}, the likelihood-based algorithms with BIC and BDE scores, and the structural IAMB and PC Algorithm. Data is generated from the model in the upper left panel (unbalanced ``exclusive or'' with a preferential progression), samples size ranges from $30$ to $120$, noise rate  from $0\%$ to $20\%$ and $1000$ ensembles are generated for each configuration of noise and sample size.
Results suggest that a threshold level on the number of samples exists such that \ALGNAME{} infers the correct claim when $\Phi=\{a \oplus b \causes c\}$. We executed  all the algorithms with an input matrix lifted to contain the target claim. }
\label{fig:multipanel-lethality}
\end{figure}
\thispagestyle{empty}
\restoregeometry

\newgeometry{top=.2cm,left=1cm,bottom=1cm,right=1cm}
\begin{figure}[p]
\begin{center}
\centerline{\includegraphics[width=1.0\textwidth]{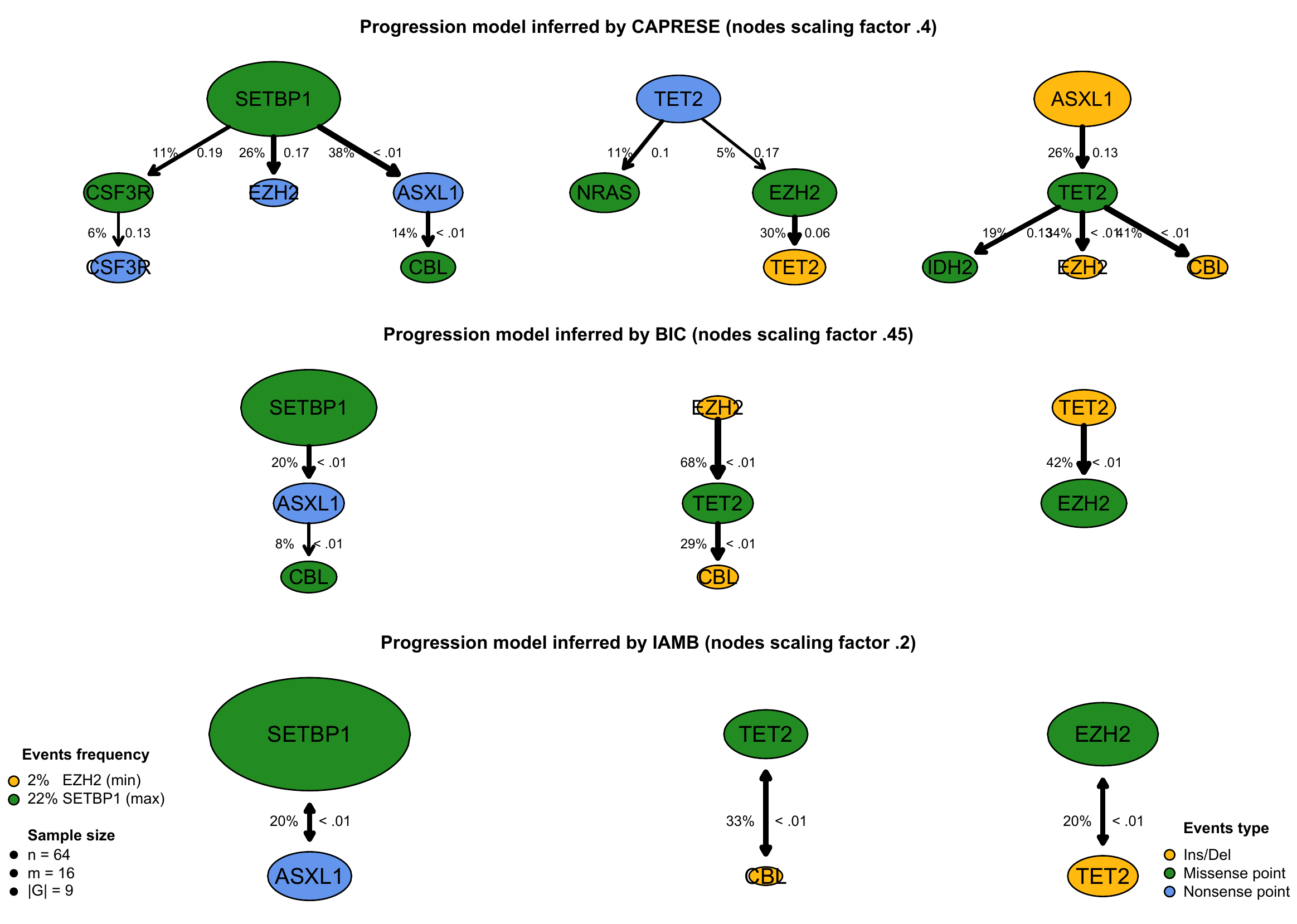} }
\end{center}
\caption[CAPRESE, IAMB and BIC progression models of \aCML{}]{{\bf CAPRESE, IAMB and BIC progression models of \aCML{}.} Progression models reconstructed from the \aCML{} dataset described in the main text - taken from \cite{piazza_nat_gen} - obtained with the following algorithms:  CAPRESE,  IAMB and  BIC. The model inferred by CAPRI is shown in the Main Text. Confidence shown is assessed as  for the CAPRI algorithm. Nodes are scaled differently to better layout the graphs reconstructed by every algorithm.}
\label{fig:results_real_acml}
\end{figure}
\thispagestyle{empty}
\restoregeometry

\newgeometry{top=.2cm,left=1cm,bottom=1cm,right=1cm}
\begin{figure}[p]
\begin{center}
{\includegraphics[width=0.65\textwidth]{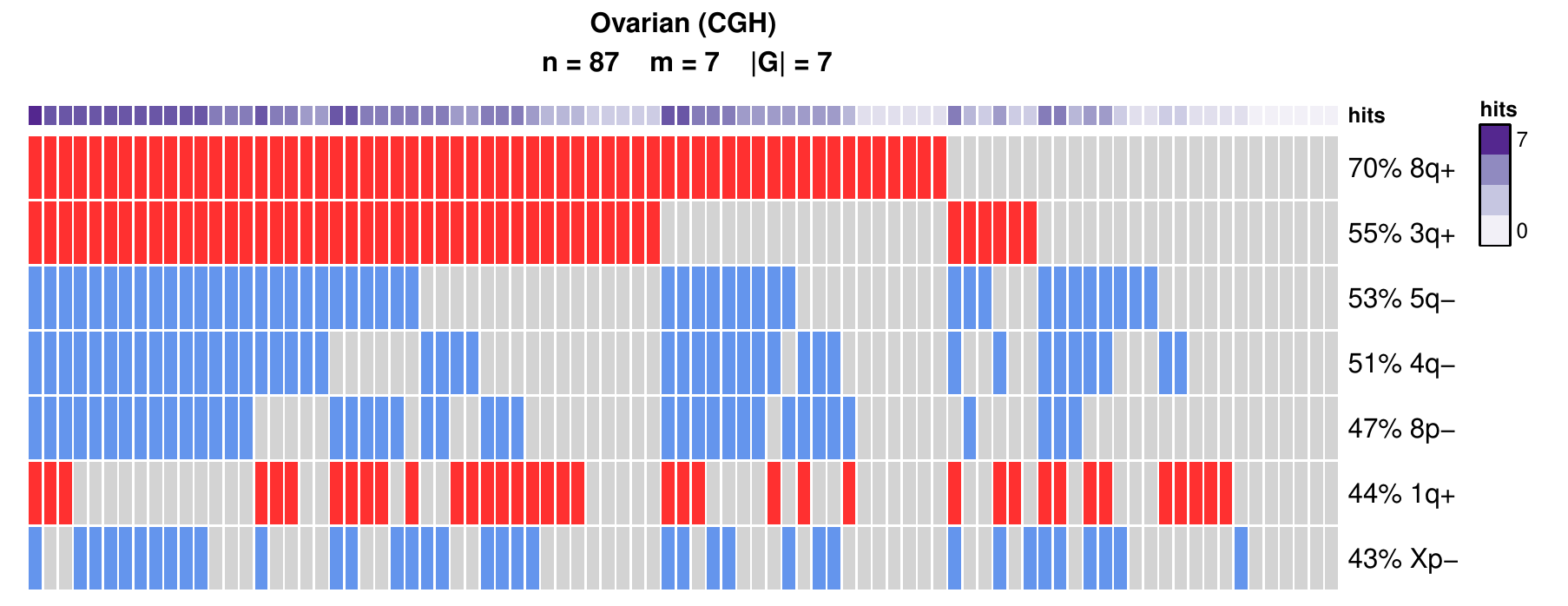} }
\centerline{\includegraphics[width=1.0\textwidth]{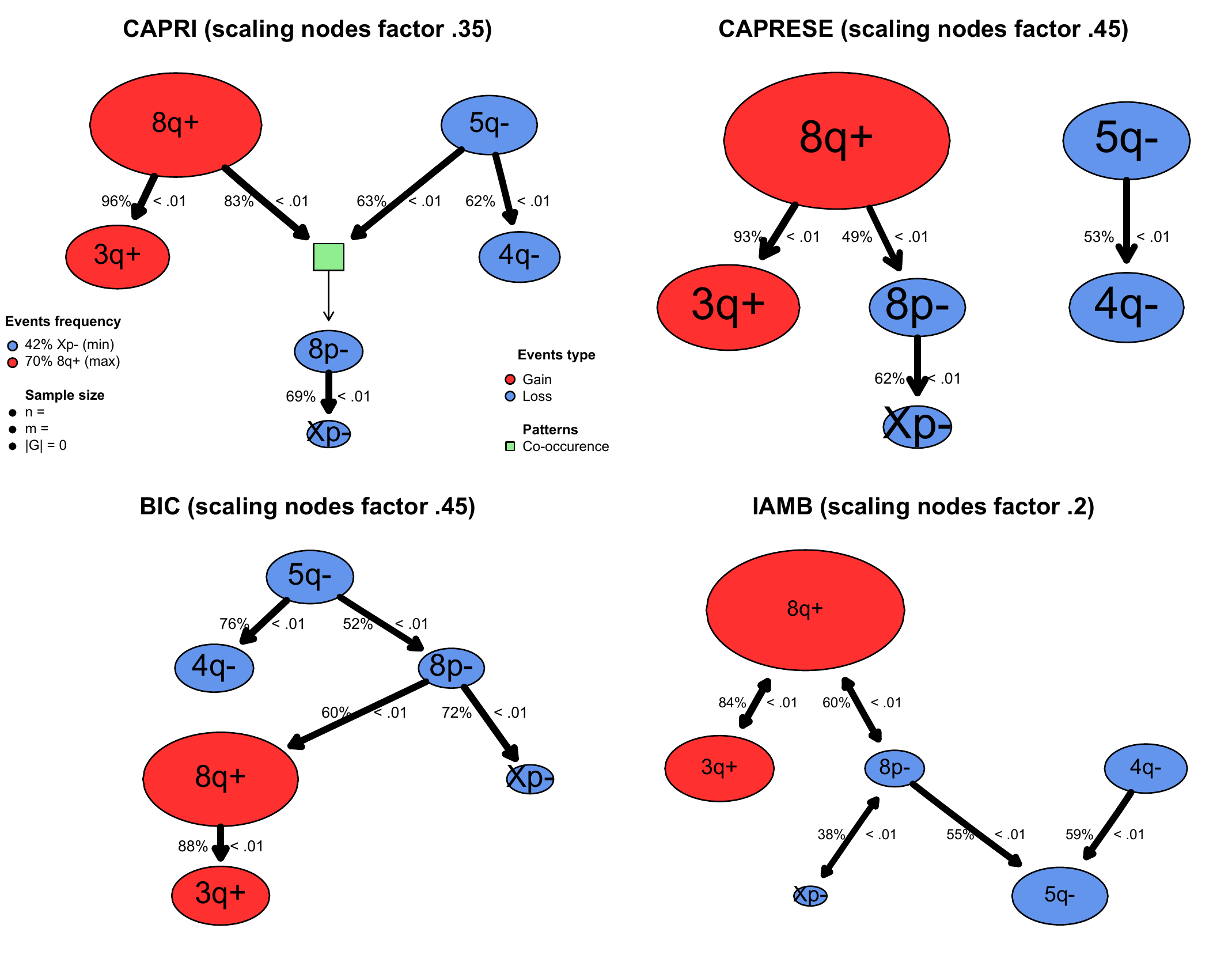} }
\end{center}
\caption[CAPRI, CAPRESE, IAMB and BIC progression models of ovarian cancer]{{\bf CAPRI, CAPRESE, IAMB and BIC progression models of ovarian cancer.} Progression models reconstructed from the ovarian cancer Comparative Genome Hybridization dataset shown in top \cite{knutsen2005interactive}. Algorithms used to infer the models are  CAPRI, CAPRESE, IAMB and BIC. Confidence  is shown as non-parametric bootstrap and hypergeometric test (p-values). Nodes are scaled differently to better layout the graphs reconstructed by every algorithm.}
\label{fig:results_real_ovarian}
\end{figure}
\thispagestyle{empty}
\restoregeometry

\end{document}